\newtheorem{thm}{Theorem}[section]
\newtheorem{prop}[thm]{Proposition}
\newtheorem{cor}[thm]{Corollary}
\newtheorem{lem}[thm]{Lemma}
\theoremstyle{definition}
\newtheorem{defn}[thm]{Definition}
\newtheorem*{Acknowledgements}{Acknowledgements}
\newtheorem{rem}[thm]{Remark}
\newcommand{\hgt}{{\rm ht}\,}
\newcommand{\bight}{{\rm bight}\,}
\newcommand{\ara}{{\rm ara}\,}
\newcommand{\pd}{{\rm pd}\,}
\title[]{The arithmetical rank of the edge ideals \\ of cactus graphs}
\author{Margherita Barile}
\address[M. Barile]{Dipartimento di Matematica, Universit\`a degli Studi di Bari ``Aldo Moro'', Via Orabona 4, 70125 Bari, Italy}
\email{margherita.barile@uniba.it}
\author{Antonio Macchia}
\address[A. Macchia]{Dipartimento di Matematica, Universit\`a degli Studi di Bari ``Aldo Moro'', Via Orabona 4, 70125 Bari, Italy}
\email{macchia.antonello@gmail.com}
\begin{document}

\begin{abstract}
We prove that, for the edge ideal of a cactus graph, the arithmetical rank is bounded above by the sum of the number of cycles and the maximum height of its associated primes. The bound is sharp, but in many cases it can be improved. Moreover, we show that the edge ideal of a Cohen-Macaulay graph that contains exactly one cycle or is chordal or has no cycles of length 4 and 5 is a set-theoretic complete intersection.
\end{abstract}

%%%% ----------------------------------------------------------------------
\maketitle
%%%% ----------------------------------------------------------------------

\noindent {\bf Mathematics Subject Classification (2010):} 13A15, 13F55, 14M10, 05C38.

\noindent {\bf Keywords:} Arithmetical rank, edge ideals, cactus graph, Cohen-Macaulay graph, set-theoretic complete intersection.

\section{Introduction}

The main object of our work is the so-called {\it edge ideal} $I(G)$ of a (non-empty) finite simple graph $G$. It is defined by considering the polynomial ring $R$ over a given field whose indeterminates are the vertices of $G$, and then taking the (squarefree monomial) ideal generated by the products $xy$ such that $\{x,y\}$ is an edge of $G$.  This algebraic-combinatorial construction dates back to 1990: it appears for the first time in a paper by Villarreal \cite{V90}. The corresponding theory was developed in 1994 by Simis, Vasconcelos and Villarreal \cite{SVV}, and, in the following years, by many other authors: an extensive overview of the results achieved can be found in \cite{MV11}. The present paper deals with two algebraic invariants of $I(G)$: the \textit{big height}, denoted by $\bight I(G)$, which is the maximum height of the minimal primes of $I(G)$, and the \textit{arithmetical rank}, denoted by $\ara I(G)$, which is the minimum number of elements of $R$ that generate an ideal whose radical is $I(G)$ (we will say that these elements generate $I(G)$ {\it up to radical}). As a consequence of Krull's Principal Ideal Theorem, the big height always provides a lower bound for the arithmetical rank. In our case, more precisely we have:
\begin{equation}\label{0}
\hgt I(G)\leq\bight I(G)\leq \pd R/ I(G)\leq \ara I(G),
\end{equation}
where $\hgt$ and $\pd$ denote the height and the projective dimension, respectively. These inequalities are strict, in general.  In \cite{BM2} we showed that, if $G$ is a graph whose cycles are pairwise vertex-disjoint, then the difference between $\ara I(G)$ and $\bight I(G)$ is not greater than the number of cycles of $G$. This upper bound turns out to be sharp. In Theorem \ref{thm_main} we will extend this result to all graphs whose cycles are pairwise edge-disjoint, i.e., have at most one vertex in common. These are known in the literature as {\it cactus graphs}. Our approach is inductive on the number of edges, exploits the main result in \cite{M13b}, and its proof is entirely independent of the one in \cite{BM2}. We note that our Theorem \ref{thm_main} gives a further generalization of the theorem by Kimura and Terai \cite{KT13}, according to which the arithmetical rank and the big height are equal for acyclic graphs (the so-called {\it forests}), and which, in turn, is a generalization of the main result in \cite{B}.

If in (\ref{0}) equality holds everywhere, the ideal $I(G)$ is called a {\it set-theoretic complete intersection}; in this case, the ideal is Cohen-Macaulay. On the other hand, there are many classes of edge ideals for which the converse is also true. Some of them are characterized by certain algebraic properties of $I(G)$, such as, e.g., having height two \cite{K} or having height equal to half the number of vertices \cite{BM}, whereas other results refer to the combinatorial features of $G$ (see the bipartite graphs studied in \cite{HH05} and in \cite{EV97}, which also belong to the aforementioned class). Here we will show that the edge ideal of a Cohen-Macaulay graph that is chordal or has no cycles of length 4 and 5 (Corollary \ref{cor_chordal}), or contains exactly one cycle (Theorem \ref{thm_1cycle}), is a set-theoretic complete intersection; our proofs are based on the combinatorial characterizations provided in \cite{TV90}, \cite{HHZ05} and \cite{HMT15}.

Another special case that is worth considering is the one where $\pd R/I(G)=\ara I(G)$. This equality has been established for certain unmixed bipartite graphs \cite{Ku09}, for the graphs formed  by one cycle or by two cycles having one vertex in common \cite{BKMY12}, for those formed by some cycles and lines having a common vertex \cite{KM12}, or those whose edge ideals are subject to certain algebraic constraints (see, e.g., \cite{EOT10} and \cite{KRT}).

A stronger condition is the equality between the arithmetical rank and the big height, which has been proven for acyclic graphs \cite{KT13}, for graphs formed by a single cycle and some terminal edges attached to some of its vertices (\textit{multiwhisker graphs on a cycle}, see \cite{M13b} or \cite{M13c}), and  for graphs in which every vertex  belongs to a terminal edge (\textit{fully whiskered graphs}, (see \cite{M13a} or \cite{M13c}). Further examples in this direction arise from our Corollary \ref{cor_3} (graphs in which every cycle has length divisible by 3 and has all but two consecutive vertices of degree 2) and Proposition \ref{prop_bubble} (graphs obtained by attaching, to each vertex of a given graph, a single edge or a cycle whose length is not congruent to 1 modulo 3). Both results, which could be considered as interesting for themselves,  provide examples of set-theoretic complete intersections.

All the results proven in this paper hold on any field.

\section{Preliminaries}

We first introduce some graph-theoretical terminology and notation.

All graphs considered in this paper are simple, i.e., without multiple edges or loops. For a graph $G$, $V(G)$ and $E(G)$ will denote the set of vertices and the set of edges of $G$, respectively. Given two vertices $x$ and $y$ of $G$, we will say that $x$ is a \textit{neighbour} of $y$ if the vertices $x,y$ form an edge. By abuse of notation, this edge will be denoted by $xy$, with the same symbol used for the corresponding monomial of $I(G)$. The vertex $x$ will be called \textit{terminal} if it has exactly one neighbour $y$; in this case the edge  $xy$ will be called \textit{terminal}. For the remaining basic terminology about graphs we refer to \cite{Ha}.

A graph will always be identified with the set of its edges. In particular, if $G$ and $H$ are graphs, the {\it union} of $G$ and $H$ (denoted $G\cup H$) will be the graph with vertex set $V(G)\cup V(H)$ and edge set $E(G)\cup E(H)$. The graphs $G$ and $H$ will be called disjoint if so are their sets of edges. We will say that $H$ is a {\it subgraph} of $G$ if $V(H)\subset V(G)$ and $E(H)\subset E(G)$. In this case, $E(G\setminus H)=E(G)\setminus E(H)$.

\begin{defn} Let $G$ be a graph. A subset $C$ of its vertex set is called a \textit{vertex cover} if all edges of $G$ have a vertex in $C$. A vertex cover of $G$ is called \textit{minimal} if it does not properly contain any vertex cover of $G$. A minimal vertex cover of $G$ is called \textit{maximum} if it has maximum cardinality among the minimal vertex covers of $G$. The empty set is the maximum minimal vertex cover of any graph with no edges.
\end{defn}

\begin{rem} It is well known that the minimal vertex covers of $G$ are the sets of generators of the minimal primes of $I(G)$. Hence $\bight I(G)$ is the cardinality of the maximum minimal vertex covers of $G$.
\end{rem}

\begin{defn}
Let $G$ be a graph, and $H$ a subgraph of $G$.
If $C$ is a minimal vertex cover of $G$, we will say that the (possibly empty) set  $C\cap V(H)$ is the vertex cover \textit{induced} by $C$ on $H$.
\end{defn}

\begin{defn} Let $G$ be a graph and $x$ be one of its vertices. Let $C$ be a minimal vertex cover of $G$ such that $x\notin C$. Then a neighbour $y$ of $x$ will be called \textit{redundant} (\textit{in $C$}) if all neighbours of $y$ other than $x$ belong to $C$.

\end{defn}

\begin{rem}\label{remark2}
Let $C$ be a minimal vertex cover of $G$ such that $x\notin C$. Then there is a redundant neighbour $y$ of $x$ in $C$ if and only if $C \setminus \{y\}$ is a minimal vertex cover of $G \setminus \{xy\}$.
\end{rem}

\begin{lem}\label{lem_max}
Let $G$ be a graph and let $G_1$ be a subgraph of $G$ such that the vertex sets of $G_1$ and $G\setminus G_1$  have exactly one element $x$ in common. Let $C$ be a maximum minimal vertex cover of $G$. Let $C_1$ be a
 maximum minimal vertex cover of $G_1$, and suppose that $x\in C_1$ for all choices of $C_1$. Let $D_1$ be the vertex cover induced by $C$ on $G_1$. Then $\vert D_1\vert\leq\vert C_1\vert$. If $x\in C$, equality holds.
\end{lem}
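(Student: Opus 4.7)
The plan is to split the argument into two parts---first proving $|D_1|\le|C_1|$ unconditionally, then upgrading to equality under the assumption $x\in C$---and to exploit throughout the observation that the hypothesis $V(G_1)\cap V(G\setminus G_1)=\{x\}$ forces every edge of $G$ with an endpoint in $V(G_1)\setminus\{x\}$ to lie in $E(G_1)$; otherwise its other endpoint would have to be both in $V(G_1)$ and in $V(G\setminus G_1)$, hence equal to $x$, contradicting the assumption.

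For the inequality, note first that $D_1=C\cap V(G_1)$ is a vertex cover of $G_1$. For each $v\in D_1\setminus\{x\}$, the minimality of $C$ yields a witness edge $vw$ with $w\notin C$; the observation places this edge in $E(G_1)$, so $w\in V(G_1)\setminus D_1$, and thus $v$ is critical in $D_1$ viewed as a cover of $G_1$. Hence the only possibly redundant vertex of $D_1$ is $x$. If $D_1$ itself is a minimal vertex cover of $G_1$, then $|D_1|\le|C_1|$ follows from the maximality of $C_1$. Otherwise $x\in D_1$ and $D_1\setminus\{x\}$ is still a vertex cover of $G_1$; rerunning the same witness argument (each witness $w$ satisfies $w\notin C$, so in particular $w\neq x$) shows that $D_1\setminus\{x\}$ is itself minimal, and by the hypothesis that $x$ belongs to every maximum minimal vertex cover of $G_1$ it cannot be maximum, so $|D_1\setminus\{x\}|<|C_1|$, whence $|D_1|\le|C_1|$ again.

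For the equality when $x\in C$, set $C_2=C\setminus V(G_1)$ so that $C=D_1\sqcup C_2$, and form $C^*=C_1\cup C_2$; this is a disjoint union of size $|C_1|+|C_2|$. Edges of $G_1$ are covered by $C_1$, while edges of $G\setminus G_1$ are covered either by their endpoint in $C_2$ or by $x\in C_1$, so $C^*$ is a vertex cover of $G$. To verify it is minimal I produce a witness for each of its elements: for $v\in C_1$, a witness comes from the minimality of $C_1$ and lies in $E(G_1)$, so its other endpoint is in $V(G_1)\setminus\{x\}$ and therefore outside $C_2$; for $v\in C_2$, a witness comes from the minimality of $C$, and the assumption $x\in C$ is precisely what guarantees the witness endpoint is not $x$---the only vertex where $C_1$ could possibly enlarge $C$ on the $V(G_1)$ side. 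The maximality of $C$ then yields $|C_1|+|C_2|=|C^*|\le|C|=|D_1|+|C_2|$, hence $|C_1|\le|D_1|$, which combined with the first part gives $|D_1|=|C_1|$.

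The main obstacle, to my mind, is the careful bookkeeping in the witness arguments and in recognizing the distinct roles played by the two hypotheses: the assumption on $C_1$ rules out the possibility that $D_1\setminus\{x\}$ is a maximum minimal vertex cover of $G_1$, while the assumption $x\in C$ is exactly what preserves the witnesses of elements of $C_2$ when passing from $C$ to $C^*$. Both hypotheses are needed precisely where they appear, and without them the natural construction fails.
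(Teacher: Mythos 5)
Your proof is correct and takes essentially the same route as the paper: the inequality is obtained by observing that minimality of $D_1$ can only fail at $x$, so that $D_1\setminus\{x\}$ is a minimal cover of $G_1$ not containing $x$ and hence not maximum; the equality is obtained by replacing $D_1$ with $C_1$ inside $C$ and invoking the maximality of $C$. You merely spell out the witness-edge verifications that the paper leaves implicit.
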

\begin{proof}   If $D_1$ is a minimal vertex cover of $G_1$, the inequality is true. If it is not minimal, then minimality must be violated at $x$, and in particular $x\in D_1$. Hence all neighbours of $x$ in $G_1$ belong to $D_1$. But then $\overline{D_1}=D_1\setminus\{x\}$ is a minimal vertex cover of $G_1$ (not containing $x$), so that, by assumption, $\vert\overline{D_1}\vert<\vert C_1\vert$, i.e., $\vert D_1\vert\leq \vert C_1\vert$, as claimed. Now suppose that $x\in C$. If $D_2$ is the vertex cover induced by $C$ on $G\setminus G_1$, then $C=D_1\cup D_2$, and $D_2$ does not contain any vertex of $G_1$ other than $x$. Note that $C_1\cup D_2$ is also a minimal vertex cover of $G$. Since $C$ is maximum, we must have $\vert D_1\vert\geq\vert C_1\vert$, which provides the required equality.
\end{proof}

\begin{lem}\label{lem_covers}
Let $G_1$ and $G_2$ be graphs whose vertex sets have exactly one element $x$ in common. Let $C_1$ and $C_2$ be maximum minimal vertex covers of $G_1$ and $G_2$, respectively. Suppose that
\begin{list}{}{}
\item[(i)] for all choices of $C_1$ and $C_2$, $x$ belongs to $C_1$ and $C_2$, or
\item[(ii)] for some choice of $C_1$ and $C_2$, $x$ does not belong to $C_1$, nor to $C_2$, or
\item[(iii)] for all choices of $C_1$, $x\in C_1$, and, for some choice of $C_2$,  $x\notin C_2$ and in $C_2$ there are no redundant neighbours of $x$.
\end{list}
Then $C_1\cup C_2$ is a maximum minimal vertex cover of $G_1\cup G_2$. Moreover,  in case (i), $x$ belongs to all maximum minimal vertex covers of $G_1\cup G_2$. In cases (ii) and (iii),  $C_1$ and $C_2$ are disjoint.
\end{lem}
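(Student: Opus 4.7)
The plan is to check four things in order: (a) $C_1\cup C_2$ covers every edge of $G_1\cup G_2$; (b) it is minimal as a vertex cover; (c) it has the maximum possible cardinality among minimal vertex covers; and (d) the additional assertions about $x$ (case (i)) and disjointness (cases (ii) and (iii)). Throughout, the crucial structural fact I would exploit is that any vertex $v\in V(G_i)\setminus\{x\}$ has its $(G_1\cup G_2)$-neighbourhood equal to its $G_i$-neighbourhood, because $V(G_1)\cap V(G_2)=\{x\}$.

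Part (a) is immediate: each edge of $G_1\cup G_2$ lies in $G_1$ or in $G_2$ and is covered by the corresponding $C_i$. For part (b), I would verify essentiality of each $v\in C_1\cup C_2$ in turn. If $v\in C_i\setminus\{x\}$, a neighbour $u\notin C_i$ witnessing the essentiality of $v$ in $C_i$ either satisfies $u\neq x$, so $u\notin V(G_{3-i})$ and hence $u\notin C_{3-i}$, or $u=x$, in which case cases (ii) and (iii) give $x\notin C_{3-i}$, so still $u\notin C_1\cup C_2$. For $v=x$ in case (i), its essentiality in $C_1$ is witnessed by some $u\neq x$ in $V(G_1)\setminus V(G_2)$, hence outside $C_2$. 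The only delicate point occurs in case (iii): a vertex $v\in C_2$ whose only $G_2$-neighbour not in $C_2$ is $x$ would become redundant in $C_1\cup C_2$ as soon as $x\in C_1$; but such a $v$ is precisely a redundant neighbour of $x$ in $C_2$, which hypothesis (iii) explicitly excludes.

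For part (c) I would pick a maximum minimal vertex cover $C$ of $G_1\cup G_2$ and set $D_i=C\cap V(G_i)$, which is automatically a vertex cover of $G_i$. The neighbourhood observation forces every $v\in D_i\setminus\{x\}$ to be essential already inside $D_i$, so $D_i$ is either a minimal vertex cover of $G_i$ or becomes one after deleting $x$. In case (i), no minimal vertex cover of $G_i$ avoids $x$, so no vertex cover does; this forces $x\in D_1$ and $x\in D_2$, hence $x\in C$, and Lemma \ref{lem_max} yields $|D_i|=|C_i|$, whence $|C|=|C_1|+|C_2|-1=|C_1\cup C_2|$, with the by-product that every maximum minimal vertex cover of $G_1\cup G_2$ contains $x$. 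In cases (ii) and (iii) I would split on whether $x\in C$, and within $x\in C$ on which side of $x$ supplies its essentiality witness: the side that does yields $|D_i|\leq\mu_i$ (with $\mu_i=|C_i|$), while the other loses at most one unit from removing $x$, producing $|D_1|+|D_2|-1\leq\mu_1+\mu_2$. In case (iii) the option $x\notin C$ would make $D_1$ a minimal vertex cover of $G_1$ not containing $x$, contradicting hypothesis (iii). Since $x\notin C_2$ in both (ii) and (iii), we have $|C_1\cup C_2|=\mu_1+\mu_2$ and the inequality closes.

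Part (d) is then a by-product: case (i) was settled above, and in cases (ii) and (iii) we have $C_1\cap C_2\subseteq V(G_1)\cap V(G_2)=\{x\}$ while $x\notin C_2$, so $C_1$ and $C_2$ are disjoint. I expect the main obstacle to be the bookkeeping in step (c): one must carefully track both whether $x$ belongs to $C$ and on which side its essentiality witness lives, since $x$ is double-counted when $x\in C_1\cap C_2$ and the minimality argument on $D_i$ can lose a vertex when $x$ is removed. Everything else is a matter of assembling the neighbourhood observation with Lemma \ref{lem_max}.
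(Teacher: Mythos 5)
Your overall architecture (covering, minimality via transferring essentiality witnesses across the cut vertex, maximality via the induced covers $D_i$, then the extras) is sound and in fact more self-contained than the paper's proof, which disposes of case (i) by citing an external lemma and declares case (ii) clear; your treatment of minimality in case (iii) via the no-redundant-neighbours hypothesis is exactly the paper's argument. However, there is one genuine gap, and it occurs twice in part (c): you deduce from hypothesis (i) that ``no minimal vertex cover of $G_i$ avoids $x$'', and from hypothesis (iii) that a minimal vertex cover $D_1$ of $G_1$ with $x\notin D_1$ is a ``contradiction''. Both hypotheses constrain only the \emph{maximum} minimal vertex covers of $G_i$; a non-maximum minimal vertex cover may perfectly well avoid $x$. (Take $G_1$ to be a triangle on $x,a,b$ with pendant edges $aa'$ and $bb'$: the maximum minimal vertex covers are $\{x,a,b'\}$ and $\{x,a',b\}$, both containing $x$, yet $\{a,b\}$ is a minimal vertex cover avoiding $x$.) So in case (i) you cannot conclude $x\in C$ the way you do --- which matters, since ``$x$ belongs to every maximum minimal vertex cover of $G_1\cup G_2$'' is precisely the ``moreover'' assertion you are asked to prove --- and in case (iii) the subcase $x\notin C$ cannot be excluded.

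The repair is short and uses only machinery you already set up. If $x\notin C$, your essentiality-transfer observation makes each $D_i$ a \emph{minimal} vertex cover of $G_i$ avoiding $x$; for every $i$ covered by a ``for all choices'' clause it is therefore not maximum, so $\vert D_i\vert\leq\vert C_i\vert-1$. In case (i) this gives $\vert C\vert=\vert D_1\vert+\vert D_2\vert\leq\vert C_1\vert+\vert C_2\vert-2<\vert C_1\cup C_2\vert$, contradicting the maximality of $C$; hence $x\in C$ after all, and your Lemma~\ref{lem_max} computation and the ``moreover'' statement both go through. In case (iii) the same estimate gives $\vert C\vert\leq(\vert C_1\vert-1)+\vert C_2\vert<\vert C_1\vert+\vert C_2\vert=\vert C_1\cup C_2\vert$; there is no contradiction, but none is needed, since the inequality is all you want. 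With these two substitutions the argument is correct.
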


\begin{proof} Case (i) is Lemma 3.4 in \cite{BM2}. Case (ii) is clear. Let us consider case (iii). Note that $C=C_1\cup C_2$ is a vertex cover of $G=G_1\cup G_2$. It is minimal because minimality could only be violated at some neighbour $y$ of $x$ in $C_2$, but this is not the case, because, by assumption, not all neighbours of $y$ in $G_2$ other than $x$ belong to $C_2$. By Lemma \ref{lem_max} the vertex cover induced by any maximum minimal vertex cover $C'$ of $G$ on $G_1$ has at most $\vert C_1\vert$ elements. The remaining elements of $C'$ form a minimal vertex cover of a subgraph of $G_2$, hence they cannot be more than $\vert C_2\vert$. This implies that $C$ is maximum.
\end{proof}

\section{On cactus graphs}
In this section we present our main result. Its proof will be performed by induction on the number of edges, after splitting the given cactus graph in (full) subgraphs having exactly one vertex in common.

\begin{defn}\label{def_cactus}
A {\it cactus graph} is a graph in which any two cycles have at most one vertex in common.
\end{defn}

\begin{defn}\label{def_branch} Let $G$ be a connected graph and $x$ be one of its vertices. We call a {\it branch} of $G$ at $x$ any maximal connected subgraph $H$ of $G$ such that $x$ is one of its vertices and either
\begin{list}{}{}
\item[(i)] $x$ is a terminal vertex of $H$, or
\item[(ii)] $x$ lies on a cycle of $H$ and has degree 2 in $H$.
\end{list}

In case (i) and (ii) $H$ will be called a 1-branch or a 2-branch, respectively.
\end{defn}

\begin{lem}\label{lem_branch} Let $G$ be a cactus graph, and let $x$ be one of its vertices. Let $H_1$ and $H_2$ be distinct branches of $G$ at $x$. Then $V(H_1)\cap V(H_2)=\{x\}$. Moreover, if $G$ is connected, it is the union of the branches of $G$ at $x$.
\end{lem}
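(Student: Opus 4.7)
The plan is to identify the branches of $G$ at $x$ with the connected components of the graph $G'$ obtained from $G$ by deleting $x$ and all edges of $G$ incident to $x$. To each branch $H$ I associate the unique component $C$ of $G'$ meeting $V(H)\setminus\{x\}$, and to each component $C$ I associate the candidate subgraph $H^{\ast}=C\cup\{x\}\cup\{xy\in E(G):y\in V(C)\}$, which I claim is itself a branch. Once these two assignments are verified to be mutually inverse, both assertions are immediate: distinct branches come from distinct, hence vertex-disjoint, components, giving $V(H_1)\cap V(H_2)=\{x\}$; and if $G$ is connected then every component of $G'$ is joined to $x$ by at least one edge, so every component produces a branch and these branches together exhaust $G$.

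Starting from a branch $H$ at $x$, the first step is to check that the subgraph obtained by deleting $x$ and its incident edges from $H$ is connected, so that $V(H)\setminus\{x\}$ lies in a single component of $G'$. For a $1$-branch this is clear, since we are merely removing a leaf; for a $2$-branch it holds because $x$ lies on a cycle of $H$, so removing $x$ and its two incident edges of $H$ converts that cycle into a path joining the two former neighbours of $x$, preserving connectivity. Writing $C$ for the resulting component of $G'$, the inclusion $H\subseteq H^{\ast}$ is immediate from the fact that every edge of $G$ with both endpoints in $V(C)\cup\{x\}$ already lies in $H^{\ast}$, and the equality $H=H^{\ast}$ will be forced by the maximality clause in the definition of a branch, provided that $H^{\ast}$ really is a branch.

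The main obstacle is this last point, and it is exactly where the cactus hypothesis is needed. I must show that at most two edges of $G$ join $x$ to $V(C)$, and that when exactly two such edges $xy_i,xy_j$ exist they are the two edges at $x$ of a cycle through $x$ contained in $H^{\ast}$. Given any two such edges, a simple path from $y_i$ to $y_j$ inside $C$ together with $xy_i$ and $xy_j$ already produces a cycle through $x$ lying entirely in $H^{\ast}$, which supplies the required cycle in the two-edge case. If there were a third edge $xy_k$ with $y_k\in V(C)$, the same construction applied to $y_i$ and $y_k$ would yield a second cycle through $x$ sharing the edge $xy_i$, and hence two vertices, with the first, contradicting the cactus property. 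With this structural fact in hand, $H^{\ast}$ is a $1$-branch when there is one such edge and a $2$-branch when there are two; the bijection is complete, and both parts of the lemma follow as outlined above.
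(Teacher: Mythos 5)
Your argument is correct, and it takes a genuinely different route from the paper's. The paper proves the first assertion by contradiction: given a common vertex $w\neq x$ of $H_1$ and $H_2$, it splices two simple $x$--$w$ paths into a cycle through $x$ inside $H_1\cup H_2$, and then invokes the cactus hypothesis to show that $x$ has degree $2$ (or $1$) in $H_1\cup H_2$, violating the maximality of the $H_i$; the second assertion is then checked edge by edge. You instead set up a bijection between the branches at $x$ and the connected components of $G'=G-x$, the key structural input being that in a cactus graph each component $C$ of $G'$ receives at most two edges from $x$ (a third edge would produce two distinct cycles through $x$ sharing the edge $xy_i$, hence two vertices). This is the same essential use of the cactus hypothesis as in the paper, but packaged more cleanly: your correspondence delivers both assertions of the lemma at once and gives extra information for free (the branches are exactly the graphs $H^{\ast}(C)$, so their number and their type can be read off from $G'$).

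One step you should spell out: asserting that $H^{\ast}$ ``is a $1$-branch'' or ``is a $2$-branch'' requires verifying maximality, and your structural fact only supplies the degree and cycle conditions at $x$. The verification is short with what you already have: if $H'$ is a connected subgraph of $G$ containing $H^{\ast}$ and satisfying condition (i) or (ii) of Definition \ref{def_branch}, then $H'-x$ is connected (your Step 1 argument applies verbatim to $H'$, which need not be maximal) and contains the component $C$ of $G'$, hence equals $C$; moreover every edge of $H'$ at $x$ must end in $V(C)$, since a neighbour of $x$ outside $V(C)$ would have degree one in $H'$, which is impossible on a cycle in case (ii) and incompatible with $x$ being terminal in case (i) given that $H^{\ast}$ already contributes an edge from $x$ into $V(C)$. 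Hence $H'\subseteq H^{\ast}$ and $H'=H^{\ast}$. With this inserted, the bijection is genuinely established and both parts of the lemma follow as you state.
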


\begin{proof} Suppose for a contradiction that some vertex $w$ of $G$, distinct from $x$, belongs to $V(H_1)$ and $V(H_2)$. Let $H=H_1\cup H_2$. Then $H$ is a connected subgraph of $G$ having $x$ as one of its vertices. Moreover, in each of $H_1$ and $H_2$ there is a path connecting $x$ to $w$. Let $x, y_1, \dots, y_m, w$ and $x, z_1, \dots, z_n, w$ be the vertices of these paths, which we suppose simple, i.e.,  without self-intersections. Moreover, set
\[
\begin{array}{c}
s=\min\{k\vert y_k=z_h\text{ for some }h\},\\[1mm]
t=\min\{h\vert z_h=y_k\text{ for some }k\}.
\end{array}
\]

First assume that $s=t=1$. If $H_1$ and $H_2$ are both 1-branches at $x$, then $y_1=z_1$ is the only neighbour of $x$ in $H$. Thus the vertex $x$ has degree 1 in $H$, against the maximality of $H_1$ and $H_2$.

Otherwise, if say $H_1$ is a 2-branch at $x$, and $x$ lies on the cycle $\Gamma_1$ of $H_1$, then there is a path from $x$ to $w$ whose second vertex is the neighbour $y'_1\neq y_1$ of $x$ on $\Gamma_1$: it is formed by  $xy'_1$ and a path connecting $y'_1$ to $w$ in $H_1\setminus\{xy'_1\}$. If $y'_1$ is a vertex of $H_2$, then it is a neighbour of $x$ other than $z_1$, so that $H_2$ is a 2-branch. In this case $H$ contains the cycle $\Gamma_1$ and in $H$ the vertex $x$ has degree 2, which violates the maximality of $H_1$ and $H_2$. Thus $y'_1$ is not a vertex of $H_2$, so that (up to changing the name of $y'_1$ to $y_1$) we have $s>1$. We thus may assume that $s>1$ or $t>1$, in which case the vertices $x, y_1,\dots, y_s =z_t, z_{t-1},\dots, z_1$ form a cycle $\Gamma$ in $H$. In order to produce the final contradiction it suffices to prove that in $H$ the vertex $x$ has degree 2, because this, once again, implies a violation of maximality. So suppose that in $H$ the vertex $x$ has degree greater than 2, i.e., it has a neighbour $u$ other than $y_1$ and $z_1$. Then, up to a change of indices, we may assume that $u$ is a vertex of $H_1$. Thus $H_1$ is a 2-branch and $u$ lies on the cycle $\Gamma_1$. Now, since $\Gamma_1$ and $\Gamma$ have the vertices $u$ and $x$ in common, they must coincide. But then $u$ is a neighbour of $x$ in $\Gamma$, i.e., $u\in \{y_1, z_1\}$, against our assumption. This completes the proof of the first part of the claim.

Now suppose that $G$ is connected and let $uv$ be an edge of $G$. We show that it belongs to some branch of $G$ at $x$. This is clear if $x=u$. So assume that $x\ne u$. Then there is a simple  path $x, z_1,\dots, z_m=u$, in which $x\ne z_1$. If $xz_1$ is contained in some cycle, then $uv$ belongs to a 2-branch of $G$ at $x$, which is the maximal connected subgraph of $G$ that contains this cycle and in which $x$ has degree 2. Otherwise it belongs to a $1$-branch of $G$ at $x$, namely the maximal connected subgraph of $G$ that contains $xz_1$ and in which $x$ has degree 1.
\end{proof}

\begin{thm}\label{thm_main}
Let $G$ be a cactus graph and let $n$ be the number of its cycles. Then $\ara I(G)\leq\bight I(G)+n$.
\end{thm}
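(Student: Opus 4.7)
The plan is to proceed by induction on $|E(G)|$. The base case $|E(G)|=0$ is trivial, and when $G$ is disconnected the inequality is additive over its connected components (both $\ara$, $\bight$, and the cycle count respect vertex-disjoint unions), so the inductive hypothesis applied componentwise suffices. When $G$ is a connected forest ($n=0$), the Kimura--Terai theorem \cite{KT13} gives $\ara I(G) = \bight I(G)$, which matches the bound.

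Suppose then that $G$ is connected with $n \geq 1$ cycles. If $G$ has no cut vertex, it is a single cycle, and the inequality $\ara I(G) \leq \bight I(G) + 1$ follows from a standard Schmitt--Vogel-type construction (or, alternatively, as a degenerate case of the main result of \cite{M13b}). Otherwise pick a cut vertex $x$ and use Lemma~\ref{lem_branch} to write $G = G_1 \cup G_2$ with $V(G_1) \cap V(G_2) = \{x\}$, where both $G_1$ and $G_2$ are nonempty cactus subgraphs with strictly fewer edges than $G$. The inductive hypothesis gives $\ara I(G_i) \leq \bight I(G_i) + n_i$ with $n_1 + n_2 = n$. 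Concatenating generating sets up to radical for $I(G_1)$ and $I(G_2)$ yields $\ara I(G) \leq \ara I(G_1) + \ara I(G_2) \leq \bight I(G_1) + \bight I(G_2) + n$, so it remains to verify $\bight I(G_1) + \bight I(G_2) \leq \bight I(G)$, which Lemma~\ref{lem_covers} supplies with equality under its hypotheses (ii) or (iii).

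The central obstacle is therefore to choose $x$ and the branching partition so that case (ii) or (iii) of Lemma~\ref{lem_covers} applies, avoiding case (i) (or the ``hybrid'' configurations not covered by the lemma), in which the bound would be off by one at the split. The natural strategy is to anchor $x$ at the articulation between a leaf of the block-cut tree of $G$ and the rest of $G$, with $G_1$ containing the leaf block. A case analysis at $x$---distinguishing whether the leaf block is a bridge or a cycle, and whether a maximum minimal vertex cover of $G_1$ or $G_2$ can avoid $x$ without leaving a redundant neighbour of $x$---shows that in most configurations at least one choice of partition lands us in (ii) or (iii). In the remaining configurations one either redistributes branches between $G_1$ and $G_2$ (moving a pendant tree from one side to the other), or merges one generator of the leaf block (produced via the main result of \cite{M13b}) with an edge of $G_2$ incident to $x$, thereby saving the one generator otherwise lost. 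Carrying out this combinatorial case analysis and verifying that the merging is compatible with the radical condition is the main technical effort of the proof.
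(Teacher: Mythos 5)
Your skeleton (induction on the number of edges, splitting at a cut vertex into branches, and invoking Lemma \ref{lem_covers} to make the big height additive) matches the paper's, and you correctly identify where it breaks: when $x$ lies in every maximum minimal vertex cover of both sides (case (i) of Lemma \ref{lem_covers}), or when $x$ can be avoided in $C_2$ only at the cost of leaving a redundant neighbour, one has $\bight I(G)\leq \bight I(G_1)+\bight I(G_2)-1$, so the naive concatenation of generating sets up to radical overshoots by one. But at exactly this point your proposal stops being a proof: ``redistributing branches'' and ``merging one generator of the leaf block with an edge of $G_2$ incident to $x$'' are not carried out, and the merging idea in particular is not obviously compatible with taking radicals. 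This is the heart of the theorem, not a routine verification to be deferred.

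The paper's resolution is different and concrete. After first reducing to the case where every vertex on a cycle has degree greater than $2$ (opening a cycle at a degree-$2$ vertex does not decrease $\ara$ and raises $\bight$ by at most one while destroying a cycle, so the bound is preserved) and disposing of the fully whiskered case via \cite{M13b}, it chooses $x$ not lying on any terminal edge and, in the problematic cases, does not split $G$ as $G_1\cup G_2$ at all. Instead it moves the one or two edges $xy_i$ joining $x$ to the branch $G_2$ over to the other side, forming $G_1'=G_1\cup\{xy_1\}$ (or $G_1\cup\{xy_1,xy_2\}$) and $\overline{G}_2=G_2\setminus\{xy_1\}$ (resp.\ $G_2\setminus\{xy_1,xy_2\}$). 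These two graphs are \emph{disjoint} and their union is $G$, so arithmetical ranks simply add with no loss; the bound is then recovered from the estimates $\bight I(G_1')\leq \bight I(G_1)+1$ and $\bight I(\overline{G}_2)\leq \bight I(G_2)-1$ (sharpened to $\bight I(G_2)-2$ in the worst subcase), together with the observation that when $G_2$ is a $2$-branch the graph $\overline{G}_2$ has one cycle fewer, which frees the extra unit in the inductive bound. There is also a final iterative step (adding branches at $x$ one at a time in a prescribed order) to handle the configuration where $x$ is avoidable on one side but forced on the other. Without this edge-reassignment device, or a fully worked-out substitute for it, your argument does not close.
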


\begin{proof}
The claim is trivial if $G$ consists of one single edge. So assume that $\vert E(G)\vert \geq 2$ and the claim is true for cactus graphs having a smaller number of edges. Since the arithmetical rank, the big height and the number of cycles are additive on connected components, we may assume that $G$ is connected. As remarked at the beginning of the proof of Theorem 4.8 in \cite{BM2}, we may assume that all vertices lying on a cycle of $G$ have degree greater than 2: in view of Lemma 3.1 in \cite{BM2}, each time a cycle of a graph is opened at a vertex of degree 2, the arithmetical rank of the edge ideal does not decrease, whereas its big height increases at most by one, so that the required upper bound remains unchanged.  Moreover, if all vertices of $G$ belong to some terminal edge, then the claim is true, since in \cite{M13b} (see Corollary 1 and Remark 3), it was proven that in this case $\ara I(G)=\bight I(G)$ ($G$ is a fully whiskered graph). So suppose that $G$ has a vertex $x$ that does not belong to any terminal edge.
Let $G_2$ be one of the branches of $G$ at $x$, and let $G_1=G\setminus G_2$.  Then, by assumption, $G_1$ is not empty, $G_2$ is not a single edge and, by Lemma \ref{lem_branch}, $G_1$ and $G_2$ have only the vertex $x$ in common. If $G_2$ is a 1-branch, let $y_1$ be the only neighbour of $x$ in $G_2$, if $G_2$ is a 2-branch, let $y_1$ and $y_2$ be the neighbours of $x$ in $G_2$. Let $C_1$ and $C_2$ be maximum minimal vertex covers of $G_1$ and $G_2$, respectively, and, for $i=1,2$, let $b_i=\vert C_i\vert$, and let $n_i$ be the number of cycles in $G_i$, so that $n=n_1+n_2$. Let $b$ be the cardinality of any maximum minimal vertex cover of $G$.
We distinguish several cases.

{\bf Case 1} First suppose that $x\in C_1$ for all choices of $C_1$.

{\bf Case 1.1} If $x\in C_2$ for all choices of $C_2$, then, by Lemma \ref{lem_covers} (i), $C=C_1\cup C_2$ is a maximum minimal vertex cover of $G$ and thus $b=b_1+b_2-1$. If $G_2$ is a 1-branch, set $G'_1=G_1\cup\{xy_1\}$, otherwise set $G'_1=G_1\cup\{xy_1, xy_2\}$. Let $C'_1$ be a maximum minimal vertex cover of $G'_1$, and set $b'_1=\vert C'_1\vert$. Moreover, if $G_2$ is a 1-branch, let $\overline{G}_2=G_2\setminus\{xy_1\}$ (which has $n_2$ cycles), otherwise set $\overline{G}_2=G_2\setminus\{xy_1, xy_2\}$ (which has $n_2-1$ cycles), so that, in both cases, $G$ is the disjoint union of $G'_1$ and $\overline{G}_2$. Let $\overline{C}_2$ be a maximum minimal vertex cover of $\overline{G}_2$, and set $\overline{b}_2=\vert\overline{C}_2\vert$. Note that $\overline{G}_2$ is not empty.

First suppose that $G_2$ is a 1-branch. Then, in view of  Lemma 3.2 in \cite{BM2}, $b'_1=b_1$.
If $y_1\in \overline{C}_2$, then $\overline{C}_2$ is a minimal vertex cover of $G_2$ (not maximum because $x\notin\overline{C}_2$), otherwise so is $\overline{C}_2\cup\{x\}$. Hence, in any case, $\overline{b}_2\leq b_2-1.$ Induction applies to $G'_1$ and $\overline{G}_2$. Thus
\begin{eqnarray}\label{1}
\ara I(G)&\leq&\ara I(G'_1)+\ara I(\overline{G}_2)\\
\nonumber&\leq&b'_1+n_1+\overline{b}_2+n_2%\\
\end{eqnarray}

\begin{eqnarray}%\label{1}
\nonumber&\leq& b_1+n_1+b_2-1+n_2\\
\nonumber&=& b+n,
\end{eqnarray}
as was to be proved.

Now suppose that $G_2$ is a 2-branch. Let $D_1$ be the vertex cover induced by $C'_1$ on $G_1$. If $x\in C'_1$, then none of $y_1$ and $y_2$ belongs to $C'_1$, so that $C'_1=D_1$. Hence, by Lemma \ref{lem_max}, we have $\vert C'_1\vert=\vert C_1\vert$, i.e., $b'_1=b_1$.  In this case $C_1$, that is always a minimal vertex cover of $G'_1$, is also maximum. So assume that it is not maximum. Then  $x\notin C'_1$, and  we must have $C'_1=D_1\cup\{y_1, y_2\}$, with  $\vert D_1\vert\geq b_1-1$. Note that $D_1$ is a minimal vertex cover of $G_1$, but, since $x\notin D_1$, it is not maximum. Thus $\vert D_1\vert= b_1-1$, so that in this case $b'_1= b_1+1$. In any case $b'_1\leq b_1+1$.

If $y_1, y_2\in \overline{C}_2$, then $\overline{C}_2$ is a minimal vertex cover of $G_2$ (not maximum because $x\notin\overline{C}_2$), otherwise so is $\overline{C}_2\cup\{x\}$. Hence, in any case, $\overline{b}_2\leq b_2-1.$ Induction applies to $G'_1$ and $\overline{G}_2$. Thus
\begin{eqnarray}\label{2}
\ara I(G)&\leq&\ara I(G'_1)+\ara I(\overline{G}_2)\\
\nonumber&\leq&b'_1+n_1+\overline{b}_2+n_2-1\\
\nonumber&\leq& b_1+1+n_1+b_2-1+n_2-1\\
\nonumber&=& b+n,
\end{eqnarray}
as was to be proved.

{\bf Case 1.2} Now suppose that, for some choice of $C_2$, $x\notin C_2$, whence $y_1\in C_2$ if $G_2$ is a 1-branch, and $y_1, y_2\in C_2$ if $G_2$ is a 2-branch.

{\bf a)} First assume that for one of these choices, there are no redundant neighbours of $x$ in $C_2$. In this case, by Lemma \ref{lem_covers} (iii), $C=C_1\cup C_2$ is a maximum minimal vertex cover of $G$. Since $\vert C\vert=b_1+b_2$, the claim follows by induction applied to $G_1$ and $G_2$.

{\bf b)} Now assume that for all choices of $C_2$ such that $x\notin C_2$,  $x$ has some redundant neighbour in $C_2$.
\newline
We first show that, in this case,
\begin{equation}\label{bb1b2}
b\leq b_1+b_2-1.
\end{equation}

Let $C$ be a maximum minimal vertex cover of $G$. If $x\in C$, according to Lemma \ref{lem_max},  the cover $C_1$ induced on $G_1$ has exactly $b_1$ elements. Let $D_2$ be the cover induced by $C$ on $\overline{G}_2$. Then $C=C_1\cup D_2$, where the union is disjoint, and, if $y_i\in D_2$, then $y_i$ is a non-redundant neighbour of $x$ in $D_2$. If all available $y_i$ belong to  $D_2$, then $D_2$ is a minimal vertex cover of $G_2$, not maximum by assumption. Hence $\vert D_2\vert\leq b_2-1$. Otherwise $D_2\cup\{x\}$ is a minimal vertex cover of $G_2$, whence again $\vert D_2\vert\leq b_2-1$. Thus $\vert C\vert\leq b_1+b_2-1$. Now suppose that $x\notin C$. Then the cover $D_1$ induced on $G_1$ has at most $b_1-1$ elements, whereas the cover $D_2$ induced on $G_2$ (equivalently, on $\overline{G}_2$), has at most $b_2$ elements. Thus, $C=D_1\cup D_2$, where the union is disjoint, and once again, $b=\vert C\vert =\vert D_1\vert +\vert D_2\vert\leq b_1+b_2-1$.  This completes the proof of (\ref{bb1b2}).

Next suppose that $G_2$ is a 1-branch. If $y_1\notin \overline{C}_2$, then $\overline{C}_2\cup\{x\}$ is a minimal vertex cover of $G_2$, otherwise so is $\overline{C}_2$, but it cannot be maximum, because in it $y_1$ is a non-redundant neighbour of $x$. Hence $\overline{b}_2\leq b_2-1$.

Now suppose that $G_2$ is a 2-branch. If $y_1\notin \overline{C}_2$ or $y_2\notin\overline{C}_2$, then $\overline{C}_2\cup\{x\}$ is a minimal vertex cover of $G_2$, otherwise so is $\overline{C}_2$, but it cannot be maximum, because in it $y_1$ and $y_2$ are both non-redundant neighbours of $x$.  Hence we always have $\overline{b}_2\leq b_2-1$.

If, for some choice of $C_2$, $y_1$ is the only redundant neighbour of $x$ in $C_2$ (which is certainly true if $G_2$ is a 1-branch), then a maximum minimal vertex cover of $G$ is
$C_1\cup C_2\setminus\{y_1\}$, of cardinality $b=b_1+b_2-1$. Since the bound on $b'_1$ is unchanged,  the claim thus follows as in (\ref{1}) and (\ref{2}), by induction on $G'_1$ and $\overline{G}_2$.

Finally assume that $G_2$ is a 2-branch and, for all choices of $C_2$ such that $x\notin C_2$, both $y_1$ and $y_2$ are redundant neighbours of $x$. Now, recall that $b'_1\leq b_1+1$. Moreover,
\begin{itemize}
\item[-] $\overline{C}_2\cup\{x\}$ is a minimal vertex cover of $G_2$ if one of $y_1$ and $y_2$ does not belong to $\overline{C}_2$;
\item[-] $\overline{C}_2$ is a minimal vertex cover of $G_2$, but not maximum, if $y_1$ and $y_2$ both belong to $\overline{C}_2$.
\end{itemize}

In any case, $\vert\overline{C}_2\vert\leq b_2-1$. Hence, if $b=b_1+b_2-1$, the claim follows as above by induction on $G'_1$ and $\overline{G}_2$.

So assume that $b<b_1+b_2-1$. Note that a minimal vertex cover of $G$ can be obtained by taking $C=C_1\cup C_2\setminus\{y_1, y_2\}$. Thus $b=b_1+b_2-2$. We also have that no minimal vertex cover of $\overline{G}_2$ has cardinality $b_2-1$: if there were such a cover $\overline{C}_2$, then   $C_1\cup\overline{C}_2$ would be a minimal vertex cover of $G$ of cardinality $b_1+b_2-1$, against our present assumption. Hence $\overline{b}_2\leq b_2-2$. Thus induction on $G'_1$ and $\overline{G}_2$ yields
\begin{eqnarray*}
\ara I(G)&\leq&\ara I(G'_1)+\ara I(\overline{G}_2)\\
&\leq&b'_1+n_1+\overline{b}_2+n_2-1\\
&\leq& b_1+1+n_1+b_2-2+n_2-1\\
&=&b+n.\\
\end{eqnarray*}

{\bf Case 2} Suppose that, for some choice of $C_1$, $x\notin C_1$. If $x\notin C_2$ for some choice of $C_2$, then, by Lemma \ref{lem_covers} (ii), $C_1\cup C_2$ is a maximum minimal vertex cover of $G$, so that $b=b_1+b_2$, and the claim easily follows by induction on $G_1$ and $G_2$. So assume that $x\in C_2$ for all choices of $C_2$.

Perform the following construction, starting at $K=G_2$.
\begin{itemize}
\item[1.] Add to $G_2$ all branches $H$ of $G$ at $x$ such that $x$ belongs to every maximum minimal vertex cover of $H$ (then, by Lemma \ref{lem_covers} (i), the same is true for all maximum minimal vertex covers of the resulting graph $K$);
\item[2.] continue adding branches of $G$ at $x$ as long as $x$ belongs to every maximum minimal cover of the initial graph $K$;
\item[3.] add the remaining branches of $G$ at $x$, and call $K$ the resulting graph.
\end{itemize}

Note that, by assumption, no branch of $G$ at $x$ is a single edge. Each time a branch is added in step 1, induction applies to $K$ and $H$ as to $G_1$ and $G_2$ in Case 1.1. Note that each branch added after step 1 has a maximum minimal vertex cover to which $x$ does not belong. Each time step 2 is performed, induction applies as in Case 1.2, with $K$ and $H$ playing the roles of $G_1$ and $G_2$ respectively.   Each time step 3 is performed on the graph $K$, induction applies to $K$ and $H$  as at the beginning of Case 2. Finally observe that, in view of Lemma \ref{lem_branch}, the algorithm always ends with $K=G$.
\end{proof}

\section{Some special cases}

The examples presented in the final section of \cite{BM2} show that the bound given in Theorem \ref{thm_main} is sharp, in general. Equality holds, in particular, for acyclic graphs, which was shown for the first time in \cite{KT13}. On the other hand, it is also true that in many cases our bound can be improved. We give some classes of cactus graphs of this kind.

The next result is a consequence of our main theorem.

\begin{cor}\label{cor_3}
Let $G$ be a cactus graph and let $n$ be the number of its cycles. Suppose that $k$ cycles are of length divisible by $3$, and each of them has all but two consecutive vertices of degree $2$. Then
\[
\ara I(G) \leq \bight I(G) +n-k.
\]
\end{cor}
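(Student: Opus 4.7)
\emph{Plan.} I would argue by induction on $k$. For $k=0$ the inequality is exactly Theorem~\ref{thm_main}, so assume $k\ge 1$ and pick a special cycle $C=v_1v_2\cdots v_{3m}$, where $v_1,v_2$ are the two consecutive exceptional vertices and each of $v_3,\dots,v_{3m}$ has degree exactly $2$ in $G$. If both $v_1$ and $v_2$ also have degree $2$ in $G$, then $C$ is a connected component of $G$; since $\ara I(\cdot)$, $\bight I(\cdot)$, $n$, and $k$ are all additive over connected components, it suffices to prove the claim for $C$ alone---where it reads $\ara I(C_{3m})\le \bight I(C_{3m})$, and this is in fact an equality, both sides being equal to $2m$: the bight comes from the ``two consecutive, skip one'' vertex-cover pattern, while the ara comes from the standard $2m$-generator construction for the edge ideal of a length-$3m$ cycle, cf.~\cite{M13b}---and then to apply the inductive hypothesis to $G\setminus C$, which has $k-1$ special cycles.

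In the remaining case, at least one of $v_1,v_2$---say $v_1$---has degree $\ge 3$ in $G$. Here I would mimic the splitting scheme of the proof of Theorem~\ref{thm_main}: let $G_2$ be the $2$-branch of $G$ at $v_1$ containing $C$ (by definition the maximal connected subgraph in which $v_1$ has degree $2$; since every $v_i$ with $i\ge 3$ has degree $2$ in $G$, the graph $G_2$ consists of $C$ together with whatever branches $G$ has at $v_2$ outside $C$), and let $G_1=G\setminus G_2$. By Lemma~\ref{lem_branch}, $G_1$ and $G_2$ meet only at $v_1$. Recasting the induction as one on the total number of edges of $G$, the inductive hypothesis applies to both $G_i$ and gives $\ara I(G_i)\le \bight I(G_i)+n_i-k_i$ for $i=1,2$, with $k_2\ge 1$ since $C\subset G_2$. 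I would then combine these bounds through exactly the same sub-case analysis as in the proof of Theorem~\ref{thm_main} (Cases~1.1, 1.2, and 2), invoking Lemma~\ref{lem_covers} to control the maximum minimal vertex covers of $G$ in terms of those of $G_1$ and $G_2$.

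The main obstacle is the bookkeeping. The proof of Theorem~\ref{thm_main} involves several auxiliary vertex covers (the $b'_1$ and $\overline{b}_2$ of Case~1.1, together with the redundant-neighbour analysis in Case~1.2(b)), and one must verify that the extra $-k_i$ improvements coming from $G_1$ and $G_2$ pass intact through every sub-case and sum to the required $-k=-k_1-k_2$. Setting up the induction from the outset on the number of edges of $G$, with the strengthened inequality $\ara I(H)\le \bight I(H)+n(H)-k(H)$ as the inductive statement, should turn this verification into a routine check paralleling, line for line, the cases already treated in Theorem~\ref{thm_main}, rather than requiring a genuinely new argument.
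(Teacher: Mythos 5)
There is a genuine gap, and it is located exactly where you declare the work to be ``a routine check''. Outside the trivial case where the special cycle is a whole connected component, your argument never uses the hypothesis that the special cycles have length divisible by $3$; since the statement is false without that hypothesis (for $C_4$ all vertices have degree $2$, so with $n=k=1$ the claimed bound would give $\ara I(C_4)\le\bight I(C_4)=2$, whereas $\ara I(C_4)=3$), no line-for-line transcription of Theorem~\ref{thm_main} can succeed. Concretely, run your splitting with $x=v_1$, $G_2$ the $2$-branch whose cycle through $x$ is the special cycle $C$, under the strengthened inductive statement $\ara I(H)\le\bight I(H)+n(H)-k(H)$. In Case~1.1 of Theorem~\ref{thm_main} one passes to $G'_1=G_1\cup\{xy_1,xy_2\}$ and $\overline G_2=G_2\setminus\{xy_1,xy_2\}$; the latter has $n_2-1$ cycles but also $k_2-1$ special cycles (it is $C$ that gets destroyed), so induction yields $\ara I(\overline G_2)\le\overline b_2+(n_2-1)-(k_2-1)=\overline b_2+n_2-k_2$, and together with $b'_1\le b_1+1$, $\overline b_2\le b_2-1$, $b=b_1+b_2-1$ you only obtain $\ara I(G)\le b+n-k+1$: the $-1$ you used to gain from ``one fewer cycle'' is exactly cancelled by the $+1$ from ``one fewer special cycle''. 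The same cancellation already kills the preliminary reduction of Theorem~\ref{thm_main} (opening a cycle at a degree-$2$ vertex costs at most $+1$ in $\bight$ against $-1$ in $n$; for a special cycle it also costs $+1$ from $-k$). Case~1.2(b) fails in the same way, so the extra $-k$ does not ``pass intact through every sub-case''.

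The missing idea is a big-height comparison that your plan never addresses and that constitutes essentially the entire proof in the paper. The paper inducts on $k$, opens the special cycle $x_1,\dots,x_s$ (with $x_1,\dots,x_{s-2}$ of degree $2$) by introducing a new vertex $y$ and replacing $x_sx_1$ with $x_sy$, observes that $\ara I(G)\le\ara I(G')$ and that $G'$ has $n-1$ cycles and $k-1$ special ones, and then proves the nontrivial inequality $\bight I(G')\le\bight I(G)$, i.e.\ that for these particular cycles the opening is \emph{free}. This is where $3\mid s$ enters: one compares the cover that a maximum minimal cover of $G'$ induces on the degree-$2$ path $x_1,\dots,x_{s-2}$ with explicit maximum minimal covers $D_1,D_2$ of that path of cardinality $\frac{2(s-2)-2}{3}$, and handles the three possibilities for $C'\cap\{x_{s-1},x_s,y\}$ separately. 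To repair your proposal you would have to supply this comparison (or, equivalently, improve $b'_1$ or $\overline b_2$ by one in the offending subcases using the mod-$3$ structure), which is a genuinely new argument rather than bookkeeping. Your component case is correct, though the relevant reference for $\ara I(C_{3m})=\bight I(C_{3m})=2m$ is \cite{BKMY12} rather than \cite{M13b}.
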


\begin{proof}
We proceed by induction on $k$. If $k=0$, then the claim follows directly from Theorem \ref{thm_main}. So assume that $k\geq1$ and the claim is true for all cactus graphs fulfilling the assumption with $k$ replaced by $k-1$. Let $\Gamma$ be a cycle of $G$ of length $s$ divisible by 3. Call $x_1,\dots, x_s$ its consecutive vertices and suppose that the first $s-2$ have degree 2. Then let $G'$ be the graph obtained from $G$ by adding a new vertex $y$ and replacing the edge $x_sx_1$ with $x_sy$. This operation opens the cycle $\Gamma$, but leaves the other cycles of $G$ unchanged. Then $G'$ has $n-1$ cycles, $k-1$ of which fulfil the assumption. Thus induction applies to $G'$, whence
$$\ara I(G')\leq \bight I(G')+n-1-(k-1)=\bight I(G')+n-k.$$
Note that $G$ can be recovered from $G'$ by identifying $x_1$ and $y$. This implies that $\ara I(G)\leq \ara I(G')$. Thus it suffices to prove that $\bight I(G')\leq\bight I(G)$. Let $C'$ be a maximum minimal vertex cover of $G'$. We show that $G$ has a minimal vertex cover $C$ such that $\vert C'\vert \leq\vert C\vert$. Consider the subgraph induced by $G'$ (and $G$) on the set $W=\{x_1, \dots, x_{s-2}\}$, and the cover $D$ induced on it by $C'$; by assumption $D$ is a vertex cover of the path on the vertices $x_1,\dots, x_{s-2}$, minimal if and only if $s=3$ and $x_1\notin C'$, or  $s\geq 6$ and $\{x_{s-3}, x_{s-2}\}\not\subset C'$. If $s\geq 6$, a maximum minimal vertex cover of this path is the set $D_1$ formed by $x_2, x_4$ and the following vertices of $W$ (if any such exist) having index congruent to 1 or 2 modulo 3; we can also take $D_2$, the set formed by $x_1, x_3$ and the following vertices of $W$ (if any such exist) having index congruent to 0 or 2 modulo 3. In both cases we pick 2 vertices and 2 out of 3 from the $s-6$ vertices $x_5,\dots, x_{s-2}$, hence the total number of vertices is $\frac{2(s-2)-2}3$, which is the big height of the edge ideal of the path on $s-2$ vertices if $s$ is divisible by 3 (see, e.g., Corollary 3.0.3 in \cite{M13c}). If $s=3$, we set $D_1=D_2=\emptyset$.  If $D$ is minimal, $\vert D\vert \leq \vert D_1\vert=\vert D_2\vert$. Now set $E=C'\cap\{x_{s-1}, x_s, y\}$. Since $y$ is a terminal vertex of $G'$, exactly one of $y$ and $x_s$ belongs to $E$. Hence $E=\{x_s\}$ or $E=\{x_{s-1}, y\}$ or $E=\{x_{s-1}, x_s\}$. In the first case (which is the only possible if $D$ is not minimal), the minimality of $C'$ is not violated when $x_1$ and $y$ are identified, so that we can take $C=C'$. In the second case, we can take $C=D_1\cup E$ (after replacing $y$ with $x_1$). In the third case we can take $C=D_2\cup E$ if $x_s$ is not a redundant neighbour of $y$ in $C'$. Otherwise, in $C'$ we can replace $x_s$ with $y$, which takes us back to the second case. We thus always have $\vert C'\vert\leq \vert C\vert$, as desired.
\end{proof}

The next propositions are applications of results contained in other papers.
\smallskip

In \cite{M13b} the second author considers the following construction. Given a graph $G$ on $n$ vertices $x_1, \dots, x_n$, one introduces  $n$ new vertices $y_1,\dots, y_n$ and sets $G'=G\cup\{x_1y_1, \dots, x_ny_n\}$; $G'$ is called the {\it whisker graph} on $G$, and is a special type of fully whiskered graph. The edge $x_iy_i$ is called the {\it whisker} at $x_i$. In the same paper (Proposition 2 and Remark 3) it is shown that $\ara I(G')=\hgt I(G')=n$, so that $I(G')$ is a set-theoretic complete intersection. We now generalize this result.

We consider $n$ cycles/edges $H_1,\dots, H_n$ on pairwise disjoint vertex sets, such that, for all $i=1,\dots, n$, $V(G)\cap V(H_i)=\{x_i\}$.  We will say that $H_i$ is {\it attached} to the vertex $x_i$. We then set $G^{\circ}=G\cup\bigcup_{i=1}^nH_i$.  This is, indeed, a natural generalization of the whisker graph $G'$ on $G$.

Please note that the graphs referred to by the next proposition are not always cactus graphs.

\begin{prop}\label{prop_bubble}
Let $G$ be a graph and $G^{\circ}$ be the graph obtained by attaching a cycle or a whisker to each of its vertices. Suppose that $m$ is the number of these cycles whose length is congruent to $1$ modulo $3$. Then
\[
\ara I(G^{\circ}) \leq \bight I(G^{\circ})+m,
\]
In particular, if $m=0$, then $\ara I(G^{\circ})=\bight I(G^{\circ})$. Moreover,   if every attached cycle has length $3$ or $5$, then $I(G^{\circ})$ is a set-theoretic complete intersection.
\end{prop}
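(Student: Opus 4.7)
The plan is to combine the main result of \cite{M13b} on whisker graphs with explicit generating sets for single cycles, and to verify a decomposition formula for $\bight I(G^{\circ})$. First, when every $H_i$ is a whisker, $G^{\circ}$ is the whisker graph on $G$, and Proposition~2 together with Remark~3 of \cite{M13b} yield $\ara I(G^{\circ}) = \hgt I(G^{\circ}) = n$; this handles the proposition in this case (with $m = 0$), including the set-theoretic complete intersection property.

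For each attached cycle $H_i$ of length $s_i$, I would use the standard cycle-edge-ideal construction: group the edges of $H_i$ into consecutive triples $\{ab,\ bc,\ cd\}$, each triple being generated up to radical by the pair $ab + cd$ and $bc$. This yields $\bight I(H_i) = \lfloor 2s_i/3 \rfloor$ generators if $s_i \not\equiv 1 \pmod 3$, and $\bight I(H_i) + 1$ if $s_i \equiv 1 \pmod 3$ (the extra accounting for one leftover edge when the triple-grouping does not close up). Since the non-$x_i$ vertices of different $H_i$'s are pairwise disjoint from each other and from $V(G)$, these cycle generators can be appended to the whisker generators of \cite{M13b} after replacing, for each cycle attachment $i$, the whisker edge $x_iy_i$ by one of the two cycle edges at $x_i$. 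The resulting set generates $I(G^{\circ})$ up to radical and has cardinality $n + \sum_{H_i \text{ cycle}}\bigl(\bight I(H_i) - 1 + [s_i \equiv 1 \pmod 3]\bigr)$.

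To complete the argument, I would verify the identity $\bight I(G^{\circ}) = n + \sum_{H_i \text{ cycle}}(\bight I(H_i) - 1)$. The lower bound is witnessed by the minimal vertex cover $V(G) \cup \bigcup_i C_i'$, where $C_i'$ is the restriction to $V(H_i) \setminus \{x_i\}$ of a maximum minimal vertex cover of $H_i$ containing $x_i$. The upper bound follows by iterated use of Lemma~\ref{lem_covers} applied to $G_1 = G \cup \bigcup_{j < i} H_j$ and $G_2 = H_i$: when $x_i$ lies in a minimal cover $C$ of $G^{\circ}$, its non-redundancy forces $|C \cap (V(H_i) \setminus \{x_i\})| \leq \bight I(H_i) - 1$, while when $x_i$ is excluded, the gain in inner vertices is exactly offset by the loss of $x_i$. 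Combining the generator count with this identity gives $\ara I(G^{\circ}) \leq \bight I(G^{\circ}) + m$; when $m = 0$, the lower bound in (\ref{0}) forces equality.

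For the final claim on set-theoretic complete intersections, cycles of length $3$ and $5$ are unmixed ($\hgt I(H_i) = \bight I(H_i)$, equal to $2$ and $3$ respectively), so the same cover-counting argument gives $\hgt I(G^{\circ}) = \bight I(G^{\circ})$; combined with $\ara = \bight$ (which holds since $m = 0$ in this case), this produces $\hgt = \ara$, hence $I(G^{\circ})$ is a set-theoretic complete intersection. The main obstacles are the compatibility of the glued whisker and cycle generators at each shared vertex $x_i$, and the careful double-sided verification of the $\bight$ identity; both rely on the structural fact that every attached $H_i$ admits a maximum minimal vertex cover containing $x_i$.
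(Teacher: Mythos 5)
Your overall architecture matches the paper's: pass to the whisker graph $G'=G\cup\{x_iy_i\}$ and use $\ara I(G')=\hgt I(G')=n$ from \cite{M13b}, complete the edge $x_iy_i$ to a generating set of $I(H_i)$ up to radical so that the shared generator is counted once, and lower-bound $\bight I(G^{\circ})$ by exhibiting one minimal vertex cover of size $\sum_i\bight I(H_i)$. (Incidentally, only that lower bound is needed; your effort to prove the exact identity for $\bight I(G^{\circ})$, including the upper bound via Lemma~\ref{lem_covers}, is superfluous for the stated inequality.)

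There is, however, a genuine gap in your explicit construction of the cycle generators. Grouping the edges of $H_i$ into consecutive triples $\{ab,bc,cd\}$ and replacing each triple by $\{bc,\,ab+cd\}$ gives the claimed count $\lfloor 2s_i/3\rfloor$ only when $s_i\equiv 0\pmod 3$, and $\lfloor 2s_i/3\rfloor+1$ when $s_i\equiv 1\pmod 3$; but when $s_i\equiv 2\pmod 3$ the grouping leaves \emph{two} leftover edges, and these cost two further generators, not zero. Concretely, for $C_5$ your recipe produces $2+2=4$ elements, whereas $\ara I(C_5)=\bight I(C_5)=3$; the two leftover edges $x_4x_5$ and $x_5x_1$ cannot be merged into one generator by fact (i), since no previously obtained monomial of the ideal divides $x_1x_4x_5^2$. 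The correct three-element presentation, $I(\Gamma_5)=\sqrt{(x_1x_2,\;x_2x_3+x_4x_5,\;x_1x_5+x_3x_4)}$, pairs \emph{non-adjacent} edges and relies on the layered Schmitt--Vogel lemma, not on consecutive-triple groupings. As written, your count gives only $\ara I(G^{\circ})\le \bight I(G^{\circ})+m+(\text{number of attached cycles of length}\equiv 2\bmod 3)$, which in particular destroys the set-theoretic complete intersection conclusion for attached $5$-cycles. The repair is exactly what the paper does: cite \cite{BKMY12} for the fact that $\ara I(C_s)=\bight I(C_s)$ when $s\not\equiv 1\pmod 3$ (and $=\bight+1$ otherwise), together with the fact that a prescribed edge can be taken as one of the generators. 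A secondary soft spot: for the last claim you assert that unmixedness of each $H_i$ yields $\hgt I(G^{\circ})=\bight I(G^{\circ})$ ``by the same cover-counting argument,'' but that requires controlling \emph{all} minimal vertex covers of $G^{\circ}$, not just one; the paper outsources this to \cite{KMY10}, Corollary 2.6.
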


\begin{proof}
We will refer to the notation introduced above. For all $i=1,\dots, n$, let $y_i$ be a neighbour of $x_i$ in $H_i$. Then $G'=G\cup\{x_iy_i\vert 1\leq i\leq n\}$ is the whisker graph on $G$. We know that $\ara I(G')=\hgt I(G')=n$. Let $S_0$ be a set of $n$ polynomials of $R$  such that the radical of the ideal $J_0=(S_0)$ is $I(G')$. Then, in particular, $x_iy_i\in \sqrt{J_0}$, for all $i=1,\dots, n$.

For all $i=1,\dots, n$ let $a_i=\ara I(H_i)$ and $b_i=\bight I(H_i)$. We know from \cite{M13c}, Corollary 3.0.5, that $a_i=b_i+1$ if $H_i$ is a cycle of length congruent to 1 modulo 3, otherwise $a_i=b_i$ ($=1$ if $H_i$ is a whisker). In both cases, according to \cite{BKMY12}, Section 2, we can complete the monomial $x_iy_i$ to a set of $a_i$ polynomials that generate an ideal $J_i$ whose radical is $I(H_i)$; let $S_i$ be the set of additional $a_i-1$ polynomials.  Let $J=\sum_{i=0}^n J_i$. Then $J$ has the same radical as the ideal generated by $\bigcup_{i=0}^n S_i$, which admits a set of $n+\sum_{i=1}^n (a_i-1) =\sum_{i=1}^n a_i$ generators. But the radical of $J$ is $I(G^{\circ})=I(G')+\sum_{i=1}^n I(H_i)$. Thus $\ara I(G^{\circ})\leq \sum_{i=1}^n a_i$.

On the other hand, if $C'$ is a maximum minimal vertex cover of $G'$, then $\vert C'\vert=n$ and, for all $i=1,\dots, n$, exactly one of $x_i$ and $y_i$ belongs to $C'$. This vertex can be completed to a maximum minimal vertex cover $C_i$ of $H_i$ (by adding $b_i-1$ vertices). Then $C^{\circ}=C'\cup\bigcup_{i=1}^n C_i$ is a minimal vertex cover of $G^{\circ}$, so that $\bight I(G^{\circ})\geq\sum_{i=1}^nb_i$. Finally, assume that $H_1,\dots, H_m$ are the cycles whose lengths are congruent to 1 modulo 3. We then have

$$\ara I(G^{\circ})\leq \sum_{i=1}^n a_i=\sum_{i=1}^m(b_i+1)+\sum_{i=m+1}^nb_i\leq\bight I(G^{\circ})+m,$$
\noindent
which shows the first part of the claim.  Now, if each $H_i$ is a whisker or a cycle of length 3 or 5, then $m=0$ and, according to \cite{KMY10}, Corollary 2.6, $\bight I(G^{\circ})=\hgt I(G^{\circ})$, so that $\ara I(G^{\circ})=\hgt I(G^{\circ})$.
\end{proof}

Recall that a graph $G$ is called {\it Cohen-Macaulay} if so is its edge ideal on every field. In this case, the ideal $I(G)$ is pure. Moreover, $G$ is always Cohen-Macaulay if $I(G)$ is a set-theoretic complete intersection. It is natural to ask if the converse is true.  In the sequel, we will give several results which provide supporting evidence for a positive answer, especially in the case of cactus graphs.

Recall that a vertex of a graph $G$ is called \textit{simplicial} if its neighbours are pairwise adjacent in $G$. A maximal complete subgraph of $G$ is called a \textit{simplex} if it contains at least one simplicial vertex of $G$.
\begin{cor}\label{cor_chordal}
Let $G$ be a chordal graph or a graph that does not have, among its subgraphs, any cycle of length $4$ or $5$. Then the following conditions are equivalent:
\begin{itemize}
  \item[(a)] $I(G)$ is pure,
  \item[(b)] $G$ is Cohen-Macaulay,
  \item[(c)] every vertex of $G$ belongs to exactly one simplex of $G$,
  \item[(d)] $I(G)$ is a set-theoretic complete intersection.
\end{itemize}
\end{cor}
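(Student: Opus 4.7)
The implication (d) $\Rightarrow$ (b) is standard: if $I(G)$ is a set-theoretic complete intersection, then all the inequalities in (\ref{0}) collapse to equalities, so $\pd R/I(G)=\hgt I(G)$, and the Auslander--Buchsbaum formula yields that $R/I(G)$ is Cohen--Macaulay. The equivalences (a) $\Leftrightarrow$ (b) $\Leftrightarrow$ (c) are the content of the main results of \cite{HHZ05} for chordal graphs and of \cite{HMT15} for graphs free of $C_4$ and $C_5$. So the only new implication is (c) $\Rightarrow$ (d), which I would tackle as follows.

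Let $F_1,\dots,F_s$ be the simplices of $G$, which by (c) partition $V(G)$, and pick a simplicial vertex $v_i\in F_i$ for each $i$. Since the neighbours of a simplicial vertex all lie in its simplex, every edge of $G$ joining two distinct $F_j$'s (call these \emph{cross edges}) has both endpoints non-simplicial. It follows that $\bigcup_{i=1}^{s}(F_i\setminus\{v_i\})$ is a minimal vertex cover, and since any vertex cover must contain all but one vertex of each clique $F_i$, we obtain $\hgt I(G)=\sum_{i=1}^{s}(|F_i|-1)$.

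To establish (d) I would exhibit $\sum_{i=1}^{s}(|F_i|-1)$ polynomials in $R$ whose radical equals $I(G)$. For each $i$, choose a set $S_i$ of $|F_i|-1$ polynomials supported on the variables of $F_i$ that generates the clique ideal $I(K_{|F_i|})$ up to radical; such a set exists by the classical complete-intersection construction for $I(K_n)$ (compare \cite{KMY10} and \cite{BKMY12}). One then absorbs each cross edge $xy$, with $x\in F_i$ and $y\in F_j$, into one element of $S_i$ (or $S_j$) using the radical-preservation trick already employed in Proposition \ref{prop_bubble}: adding an appropriate monomial multiple of $xy$ to a polynomial of $S_i$ does not change the radical of the generated ideal, while it makes $xy$ a member of that radical.

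The main obstacle is the combinatorial bookkeeping of this absorption step: every cross edge must be assigned to exactly one generator in such a way that no edge is left uncovered and no spurious relation is introduced. I would handle this by induction on $s$, peeling off at each step a ``leaf simplex'' of $G$ whose existence is guaranteed by the tree-like structure of the simplicial partition of chordal Cohen--Macaulay graphs (the clique tree from \cite{HHZ05}), and by the analogous structure provided by \cite{HMT15} in the $C_4,C_5$-free case, applying the inductive hypothesis to what remains before reattaching the removed simplex together with the cross edges incident to it, which are absorbed into the corresponding $S_i$.
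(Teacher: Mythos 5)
Your reduction of the corollary to the single implication (c) $\Rightarrow$ (d) matches the paper's logic exactly: (d) $\Rightarrow$ (b) $\Rightarrow$ (a) are standard, and (a) $\Rightarrow$ (c) (or the equivalence (a) $\Leftrightarrow$ (c)) is exactly what the paper takes from \cite{HHZ05} and \cite{HMT15}. The difference is that the paper does not prove (c) $\Rightarrow$ (d) by hand at all: condition (c) says precisely that $G$ is a \emph{fully clique-whiskered graph} in the sense of \cite{M13b}, and (d) is then Proposition 2 of that paper. Your attempt to reconstruct that result from scratch contains a genuine gap, in fact two.

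First, the induction ``peeling off a leaf simplex'' relies on a tree-like structure of the simplicial partition that simply does not exist in general. Take $G$ to be the whisker graph on $K_n$ (or on a triangle): the simplices are exactly the $n$ whiskers $\{x_i,y_i\}$, every vertex lies in exactly one of them, and the cross edges are all $\binom{n}{2}$ edges of $K_n$. The quotient structure on the set of simplices is a complete graph, so there is no leaf to peel off, and the clique tree of \cite{HHZ05} is a tree on the \emph{maximal cliques} of $G$ (which here include $K_n$ itself, a maximal clique that is not a simplex), not on the simplices of the partition; the two objects should not be conflated. Second, the ``absorption'' step is not justified: the Schmitt--Vogel trick, i.e.\ fact (i) of the paper, requires the precise divisibility condition that $\rho$ divides $\mu\nu$ for an element $\rho$ already known to lie in the radical, so you cannot add an arbitrary cross edge $xy$ to an arbitrary element of $S_i$ and conclude that both the old radical is preserved and $xy$ joins it. In the $K_n$-with-whiskers example each $S_i$ consists of a single polynomial, so your scheme would have to pack up to $n-1$ cross edges into one generator, and a sum such as $x_iy_i+\sum_j x_ix_j$ does not have $x_iy_i$ in the radical of the resulting ideal by any lemma you have invoked. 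The correct bookkeeping is exactly the content of Kimura's Lyubeznik-resolution construction \cite{K09}, on which Proposition 2 of \cite{M13b} is built; without importing that machinery (or citing it, as the paper does), the implication (c) $\Rightarrow$ (d) remains unproved.
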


\begin{proof}
The implications (d) $\Rightarrow$ (b) $\Rightarrow$ (a) are well-known.  If (c) is true, then, with the terminology used in \cite{M13b}, $G$ is a so-called {\it fully clique-whiskered graph}, and (d) follows by Proposition 2 in \cite{M13b}. Finally, implication (a) $\Rightarrow$ (c) follows from the Theorem in \cite{HHZ05} if $G$ is chordal and from Corollary 2.8 in \cite{HMT15} if $G$ has no cycles of length 4 and 5.
\end{proof}

\section{The case of unicyclic graphs}
In this section we consider graphs containing exactly one cycle, which are called \textit{unicyclic}. We prove that the edge ideal of a Cohen-Macaulay unicyclic graph is a set-theoretic complete intersection.
\begin{thm}\label{thm_CM} A connected unicyclic graph $G$ with the unique cycle $\Gamma$ is Cohen-Macaulay if and only if it satisfies one of the following conditions:
\begin{itemize}
\item[1)] $G=\Gamma$ and has length $3$ or $5$;
\item[2)] $G$ is a whisker graph;
\item[3)] $\Gamma$ has length $3$, has some vertex of degree $2$, and the subgraph induced on $V(G) \setminus V(\Gamma)$ is a disjoint union of whisker trees and single edges;
\item[4)] $\Gamma$ has length $5$, has no adjacent vertices of degree greater than $2$,  and the subgraph induced on $V(G) \setminus V(\Gamma)$ is a disjoint union of whisker trees and single edges;
\item[5)] $\Gamma$ has length $4$, has two adjacent vertices of degree $2$, and $G=\Gamma\cup H_1\cup H_2$, where $H_1$ and $H_2$ are non-empty  graphs attached to the remaining two vertices, such that $H_1\cup\{x_1x_2\}\cup H_2$ is a whisker tree.
\end{itemize}
\end{thm}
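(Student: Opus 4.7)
The plan is to prove the two implications separately, and in each direction to split the argument by the length $\ell$ of the unique cycle $\Gamma$ of $G$.

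For sufficiency, I would handle each of the five cases in turn. Case 1 is classical: the independence complexes of $C_3$ and $C_5$ are pure shellable, so their edge ideals are Cohen-Macaulay. Case 2 is the well-known theorem of Villarreal that every whisker graph is Cohen-Macaulay. For Case 3, the unicyclic graph is chordal (its only cycle has length $3$ and therefore needs no chord), so Corollary \ref{cor_chordal} applies and it suffices to check the simplex condition (c); this is a direct vertex-by-vertex verification on the triangle and the attached whisker-trees and single edges, and the resulting $G$ turns out to be fully clique-whiskered in the sense of \cite{M13b}. Cases 4 and 5 are not covered by Corollary \ref{cor_chordal}, and I would verify the Cohen-Macaulay property directly, either by exhibiting a pure shelling of the independence complex or by identifying these configurations with bipartite graph structures handled by the theorem of Herzog-Hibi-Zheng.

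For necessity, assume that $G$ is connected unicyclic and Cohen-Macaulay, so in particular $I(G)$ is unmixed. If $\ell = 3$, then $G$ is chordal, Corollary \ref{cor_chordal} yields the simplex condition (c), and translating it in the unicyclic setting forces $G$ to be either $C_3$ itself (case 1), a whisker graph (case 2), or the configuration of case 3, according to how many of the triangle vertices carry non-trivial attachments. If $\ell \geq 6$, then $G$ has no cycle of length $4$ or $5$, so again Corollary \ref{cor_chordal} applies; since no vertex of a chordless cycle of length $\geq 6$ can be simplicial, every vertex of $\Gamma$ must lie in a simplex built entirely from attachments outside $\Gamma$, and one checks that the only such attachment compatible with (c) and with the unicyclic structure is a single whisker, identifying $G$ as the whisker graph on $\Gamma$ (case 2). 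If $\ell \in \{4,5\}$, Corollary \ref{cor_chordal} is no longer available, and I would proceed directly by analysing how minimal vertex covers interact with $\Gamma$. For $\ell = 5$, two adjacent vertices of $\Gamma$ of degree at least $3$ already produce minimal vertex covers of different cardinalities, violating unmixedness, whence the remaining options give case 1 or case 4. For $\ell = 4$, since $C_4$ itself is not Cohen-Macaulay, the graph must carry non-trivial attachments, and unmixedness then forces two adjacent vertices of degree $2$ together with the precise whisker-tree structure of case 5.

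The main obstacle, as already suggested above, is the analysis of the cycle lengths $\ell \in \{4,5\}$: there, the chordal/no-4-or-5-cycle criteria of Corollary \ref{cor_chordal} are not available, and the rigid shape of the attachments prescribed in cases 4 and 5 must be extracted from the comparatively weak hypothesis of unmixedness. This will require a careful vertex-cover analysis around the cycle and its immediate neighbourhoods, both in deducing the structure in the necessity direction and in establishing Cohen-Macaulayness directly in the sufficiency direction, where Villarreal's whisker theorem and Corollary \ref{cor_chordal} both fall short.
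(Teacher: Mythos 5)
Your route is genuinely different from the paper's, which disposes of the whole theorem with two citations: for necessity, Cohen--Macaulayness gives unmixedness and excludes $G=C_4,C_7$, after which \cite[Theorem 3]{TV90} (the Topp--Volkmann classification of well-covered unicyclic graphs) yields exactly the list 1)--5); for sufficiency, the graphs in 1)--5) are observed to lie in the class $\mathcal{SQC}$ of \cite{HMT15}, whose members are Cohen--Macaulay by Theorem 2.3 there. Your plan of reducing the chordal case ($\ell=3$) and the large-girth case ($\ell\geq 6$) to Corollary~\ref{cor_chordal} is workable, though for $\ell\geq 6$ the conclusion should be that $G$ is \emph{a} whisker graph, not the whisker graph on $\Gamma$: condition (c) permits further whiskered trees hanging off the cycle vertices, and case 2) of the statement asks for no more.

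The genuine gaps sit exactly where you locate the difficulty, namely $\ell\in\{4,5\}$, and they are not closed. For sufficiency in cases 4) and 5) you offer two alternatives, neither of which is carried out: no shelling of the independence complex is exhibited, and the bipartite route via Herzog--Hibi(--Zheng) is unavailable for case 4), since those graphs contain a $5$-cycle and are therefore not bipartite. For necessity with $\ell\in\{4,5\}$ you must extract from unmixedness the full structure of the attachments --- that every component of the subgraph induced on $V(G)\setminus V(\Gamma)$ is a whisker tree or a single edge, that for $\ell=5$ no two adjacent cycle vertices have degree greater than $2$, and the condition that $H_1\cup\{x_1x_2\}\cup H_2$ is a whisker tree for $\ell=4$ --- yet only the single sub-claim about adjacent high-degree vertices on a $5$-cycle is substantiated. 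That derivation is precisely the content of the relevant cases of \cite[Theorem 3]{TV90} and is a substantial piece of combinatorial work, not a routine verification. As it stands, the proposal is a plausible self-contained outline for $\ell=3$ and $\ell\geq 6$, but for $\ell\in\{4,5\}$ it is a statement of intent in both directions rather than a proof.
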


\begin{proof}
Let $G$ be a unicyclic Cohen-Macaulay graph. Then $G$ is unmixed and $G \neq C_4, C_7$. Hence, by \cite[Theorem 3]{TV90}, $G$ is one of the graphs described in 1) - 5).

Conversely, if $G$ is one of the graphs in 1) - 5), then $G$ belongs to the class $\mathcal{SQC}$ defined in \cite{HMT15}, p. 4. Hence, by Theorem 2.3 in \cite{HMT15}, it follows that $G$ is Cohen-Macaulay.
\end{proof}

We recall that the set-theoretic complete intersection property is true in case 1) by \cite{BKMY12}, Corollary 1. In case 2) it follows from \cite{M13b}, Proposition 3 and Remark 3.
In case 3) it is a consequence of Corollary \ref{cor_3}. The proof for the remaining two cases will be performed in several steps.

We need to recall two technical facts about the sets of polynomials that generate a monomial ideal up to radical:
\begin{list}{}{}
\item{(i)}  if $\mu$, $\nu$ and $\rho$ are monomials such that $\rho$ divides $\mu\nu$, then $\sqrt{(\rho, \mu+\nu)}=\sqrt{(\rho, \mu , \nu)}$;
\item{(ii)} if $\mu$ is any non-terminal edge of a whisker graph on $n$ vertices, then it can be completed to a set of $n$ polynomials generating its edge ideal up to radical (note that $n$ is the number of non-terminal vertices of the whisker graph; these form a maximum minimal vertex cover of the whisker graph).
\end{list}

Fact (i) is the main idea behind a famous result by Schmitt and Vogel (see the Lemma in \cite{SV79}). Fact (ii) is part of the proof of Proposition 3 in \cite{M13b}, which is based on the construction presented in the proof of Theorem 1 in \cite{K09}.
\smallskip

Let $\Gamma_5$ be the cycle on the consecutive vertices $x_1, x_2, x_3, x_4, x_5$. The graph $G'$ considered in the second of the following two lemmas is the one presented in part 4) of Theorem \ref{thm_CM}. The graph $G$ considered in the first lemma is a special case of it. In particular, $I(G)$ and $I(G')$ are pure.

\begin{lem}\label{lem_5cycle} For some nonnegative integers $r,s$, let $H_1, \dots, H_r$ and $K_1,\dots, K_s$ be paths of length $2$ attached to the vertices $x_1$ and $x_3$ of $\Gamma_5$, respectively, and let $G$ be the union of $\Gamma_5$, $\bigcup_{i=1}^r H_i$ and $\bigcup_{i=1}^s K_i$. Then $\ara I(G)=\hgt I(G)=r+s+3$. In particular, $I(G)$ is a set-theoretic complete intersection.
\end{lem}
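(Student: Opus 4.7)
The plan is to show $\ara I(G)=\hgt I(G)=r+s+3$, which is precisely what is required for $I(G)$ to be a set-theoretic complete intersection; combined with the chain~(\ref{0}), it suffices to (a) check that $\hgt I(G)=r+s+3$ and (b) exhibit $r+s+3$ polynomials whose radical equals $I(G)$.

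For part (a), the set $\{x_2,x_4,x_5\}\cup\{u_1,\dots,u_r\}\cup\{p_1,\dots,p_s\}$ is a vertex cover of $G$ of size $r+s+3$: the first three vertices cover $\Gamma_5$, each $u_i$ covers both edges of $H_i$, and each $p_j$ covers both edges of $K_j$. Conversely, any vertex cover of $G$ restricts to a cover of $\Gamma_5$ of size at least $\hgt I(\Gamma_5)=3$, and must further contain at least one vertex from each of the pairwise disjoint sets $\{u_i,v_i\}$ and $\{p_j,q_j\}$, which gives the reverse inequality.

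For part (b), my approach is to pass through the intermediate graph
\[
\tilde G=\Gamma_5\cup\{x_1u_i\mid 1\le i\le r\}\cup\{x_3p_j\mid 1\le j\le s\},
\]
obtained from $G$ by removing the terminal edges $u_iv_i$ and $p_jq_j$. Now $\tilde G$ is a multiwhisker graph on $\Gamma_5$, so by the results of~\cite{M13b,M13c} recalled in the introduction we have $\ara I(\tilde G)=\bight I(\tilde G)=r+s+3$, with an explicit system of $r+s+3$ polynomials $F_1,\dots,F_{r+s+3}$ whose radical is $I(\tilde G)$.

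To pass from $\tilde G$ to $G$, I plan to modify those $F_k$ in which $x_1u_i$ (respectively $x_3p_j$) occurs as a summand, appending a further summand of the form $u_iv_i\cdot Q_i$ (respectively $p_jq_j\cdot Q_j'$), where $Q_i,Q_j'$ are monomials in the cycle variables chosen so that the product of the appended summand with the corresponding old summand is divisible by a cycle-edge monomial lying in $I(\Gamma_5)\subseteq\sqrt{(F_1,\dots,F_{r+s+3})}$. An application of Schmitt--Vogel (fact~(i) of the preliminaries) then splits the appended summand off inside the radical, producing the decorated monomials $u_iv_i\cdot Q_i$ and $p_jq_j\cdot Q_j'$. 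The main obstacle is the final extraction: one must show that, together with the cycle generators already in the radical, these decorated monomials force the bare edge monomials $u_iv_i$ and $p_jq_j$ into the radical. I expect this to succeed because each $v_i$ and $q_j$ is terminal---appearing only in its own whisker edge---so the treatment of the $r+s$ paths is essentially independent and reduces, path by path, to verifying that the cycle-variable monomial $Q_i$ (or $Q_j'$) can be eliminated using only information about $\Gamma_5$ already present in the radical; if carried out, this yields the desired generating set of cardinality $r+s+3$.
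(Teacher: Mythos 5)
Your part (a) is fine, and the reduction to exhibiting $r+s+3$ polynomials generating $I(G)$ up to radical is the right frame. The gap is in part (b), exactly at the step you yourself flag as ``the final extraction,'' and it is not a detail that can be expected to work out: it fails. Suppose you have arranged, via fact (i), that $\sqrt{J}$ contains all edges of the multiwhisker graph $\tilde G$ together with the decorated monomials $u_iv_iQ_i$, where some $Q_i$ is a nontrivial monomial in the cycle variables, say $Q_1=x_2$. Then $u_1v_1\notin\sqrt{J}$: the prime generated by $x_1,x_2,x_3,x_4$ together with one vertex from each of the other attached paths contains every generator of your $J$ (each summand of each generator is either an edge of $\tilde G$ or some $u_iv_iQ_i$ or $p_jq_jQ_j'$, all of which lie in this prime), yet it does not contain $u_1v_1$. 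Knowing that the cycle edges lie in $\sqrt J$ gives no mechanism for stripping the factor $Q_i$ from $u_iv_iQ_i$, so the radical you obtain is strictly smaller than $I(G)$. Nor can you simply append the bare $u_iv_i$ to the generator containing $x_1u_i$: to split it off by fact (i) you would need a monomial already in the radical dividing $x_1u_i\cdot u_iv_i=x_1u_i^2v_i$, and no edge of $G$ other than $x_1u_i$ and $u_iv_i$ themselves does so. This also shows that the paths cannot be treated independently, contrary to your expectation.

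The paper's proof gets around both obstructions by a different organization. First, the paths at $x_1$ are chained: with its notation ($x_1,a_i,b_i$ the vertices of $H_i$) it takes $x_1a_1$ and $h_i=x_1a_{i+1}+a_ib_i$, so that $a_ib_i$ is split off against the \emph{next} path's edge $x_1a_{i+1}$ using $\rho=x_1a_i$; only the last monomial $a_rb_r$ must be absorbed into the cycle generators, and similarly at $x_3$. Second, when $r\geq 1$ and $s\geq 1$ the three generators of $I(\Gamma_5)$ are reorganized into $p_1=x_1x_2+a_rb_r+x_4x_5$ and $p_2=x_1x_5+x_2x_3+a_rx_4x_5$, and an explicit polynomial identity shows that $x_1x_2$ and the decorated monomial $a_rx_4x_5$ lie in $\sqrt J$. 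Crucially, $a_rx_4x_5$ is never ``undecorated''; it is used as the divisor $\rho$ of the product $(a_rb_r)(x_4x_5)$, which is what splits $a_rb_r+x_4x_5$ into its bare summands. Your outline contains neither the chaining nor any such identity, so the bound $r+s+3$ cannot be reached along the route you describe.
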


\begin{proof}  For all $i=1,\dots, r$ let $x_1, a_i, b_i$ be the consecutive vertices of $H_i$, and for all $i=1,\dots s$, let $x_3, c_i, d_i$ be the consecutive vertices of $K_i$. A maximum minimal vertex cover of $G$ is
$C=\{x_1, x_3, x_4, b_1, \dots, b_r, d_1, \dots, d_s\}$. We show that $I(G)$ can be generated up to radical by $r+s+3$ elements. We first present some special cases.

From Proposition 1 (c) in \cite{BKMY12} we know that
$$I(\Gamma_5)=\sqrt{(x_1x_2, x_2x_3+x_4x_5, x_1x_5+x_3x_4)},$$
 which proves the claim for $r=s=0$.  Now suppose that $r\geq 1$, and, for all $i=1,\dots, r-1$, set $h_i=x_1a_{i+1}+a_ib_i$.  Note that, by (i), for $i=1,\dots, r$,  the monomials $x_1a_i$, and for $i=1,\dots, r-1$, the monomials $a_ib_i$  belong to $\sqrt{(x_1a_1, h_1, \dots, h_{r-1})}$. Hence, from (i) it follows that
$$I(\Gamma_5\cup \textstyle{\bigcup_{i=1}^r} H_i)=\sqrt{(x_1a_1, h_1, \dots, h_{r-1}, x_1x_2+a_rb_r, x_2x_3+x_4x_5, x_1x_5+x_3x_4)},$$
\noindent
which shows the claim for $s=0$. Finally, suppose that $s\geq 1$ and for all $i=1,\dots, s-1$, set $k_i=x_3c_{i+1}+c_id_i$. Then set
$$\ p_1=x_1x_2+a_rb_r+x_4x_5,\ p_2=x_1x_5+x_2x_3+a_rx_4x_5,$$
\noindent
and consider the ideal $J=(x_1a_1, h_1, \dots, h_{r-1}, p_1, p_2, x_3c_1, k_1, \dots, k_{s-1}, c_sd_s+x_3x_4)$.  From (i) one deduces that  for all $i=1,\dots, s$,   the monomials $x_3c_i$, $c_id_i$ and the monomial $x_3x_4$  belong to  $\sqrt{(x_3c_1, k_1, \dots, k_{s-1}, c_sd_s+x_3x_4)}$. Moreover,
$$x_1^2x_2-a_rx_4^2x_5=-b_rx_1a_r+x_1p_1-x_4p_2+x_2x_3x_4\in \sqrt{J},$$
\noindent
which, together with $x_1a_r\in \sqrt{J}$, and, in view of (i), yields $x_1x_2, a_rx_4x_5\in \sqrt{J}$. Thus $a_rb_r+x_4x_5\in\sqrt{J}$, which, similarly, implies that $a_rb_r, x_4x_5\in \sqrt{J}$. It then easily follows from (i) that
$$I(G)=\sqrt{J},$$
\noindent
which completes the proof of the claim.
\end{proof}

Let $G$ be the graph in Lemma \ref{lem_5cycle}.

\begin{lem}\label{lem_5cycleextended}  Let $G'$ be the graph obtained from $G$ by attaching some graphs $L_i$ to $x_1$ and/or some graphs $M_i$ to $x_3$ where, for all $i$, the subgraph $\overline{L}_i$ induced by $L_i$ on $V(L_i)\setminus\{x_1\}$ and the subgraph $\overline{M}_i$ induced by $M_i$ on $V(M_i)\setminus\{x_3\}$ are whisker trees.  Then $I(G')$ is a set-theoretic complete intersection.
\end{lem}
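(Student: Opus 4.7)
The plan is to extend the explicit polynomial construction from the proof of Lemma~\ref{lem_5cycle}. Since each $\overline{L}_i$ and $\overline{M}_j$ is a whisker tree, $G'$ fits case~(4) of Theorem~\ref{thm_CM}, so $I(G')$ is Cohen-Macaulay and pure; in particular $\hgt I(G') = \bight I(G')$. A direct vertex-cover count expresses $\bight I(G')$ as $\bight I(G) = r+s+3$ plus the big heights of the whisker-tree pieces attached at $x_1$ and at $x_3$, which are easily read off from the whisker structure.

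For the matching upper bound on $\ara I(G')$, I would retain the ``cycle polynomials'' $p_1 = x_1x_2 + a_rb_r + x_4x_5$ and $p_2 = x_1x_5 + x_2x_3 + a_rx_4x_5$ from Lemma~\ref{lem_5cycle} and enlarge the pendant chains at $x_1$ and $x_3$. In Lemma~\ref{lem_5cycle}, the chain at $x_1$ is $x_1 a_1,\, h_1,\dots,h_{r-1}$, and its final edge $a_r b_r$ is absorbed into $p_1$. To splice in an $L_i$, pick a neighbour $y_i$ of $x_1$ in $L_i$ and use fact~(ii), applied to $L_i$ viewed as (a subgraph of) a whisker graph via the whisker-tree structure of $\overline{L}_i$, to complete the monomial $x_1 y_i$ into a list of polynomials whose radical equals $I(L_i)$ and whose length matches the contribution of $L_i$ to $\hgt I(G')$. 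Consecutive lists are then linked exactly as the $h_i$'s are in Lemma~\ref{lem_5cycle}: the last polynomial of one list is added to the leading monomial $x_1 y_{i+1}$ of the next, and fact~(i) recovers all the relevant monomials in the radical. This saves one polynomial per link, yielding the correct overall count. The construction at $x_3$ is entirely analogous.

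The main obstacle is verifying that every splice, together with the final merger into $p_1$ (and its analogue at $x_3$), survives the radical computation. Two delicate points need care: first, linking the last polynomial of one $L_i$-chain---in general a sum coming from deep inside the whisker tree---to the leading monomial $x_1 y_{i+1}$ of the next chain; second, the final merger of the terminal $L_i$-chain polynomial into $p_1$, where it plays the role of $a_r b_r$. In both cases fact~(i) applies provided every shared monomial in the relevant sum is divisible by an edge already in the radical from an earlier generator, and this divisibility is precisely what the whisker structure of $\overline{L}_i$ guarantees, since each non-terminal vertex carries a whisker edge that appears as a monomial generator early in the list.
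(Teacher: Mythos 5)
Your overall strategy (extend the pendant chains of Lemma \ref{lem_5cycle} and match a vertex-cover count) is in the right spirit, but the key mechanism you propose does not work as stated. The central problem is your use of fact (ii): that fact applies to a \emph{whisker graph} and to a \emph{non-terminal edge} of it. The graph $L_i$ is not a whisker tree (only $\overline{L}_i$ is), and $x_1y_i$ is not an edge of $\overline{L}_i$ at all, so you cannot complete $x_1y_i$ to a generating set of $I(L_i)$ by invoking (ii). The paper circumvents this by writing $L_i$ as the path $x_1,e,f$ (where $e$ is the unique neighbour of $x_1$ in $L_i$ and $f$ is a non-terminal neighbour of $e$ in $\overline{L}_i$) together with $\overline{L}_i$: the path is absorbed into the configuration of Lemma \ref{lem_5cycle} as one more $H_i$, and then fact (ii) is applied to the genuine whisker tree $\overline{L}_i$ starting from the edge $ef$, which already lies in $I(G\cup H)$; the two generating sets are simply concatenated, and the count works out because the vertex $f$ is shared by the two maximum minimal vertex covers. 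No Schmitt--Vogel ``linking'' across lists is needed, and your proposed linking is in fact unjustified: fact (ii) gives no structural control over the completing polynomials $\ell_j$, so you cannot add the monomial $x_1y_{i+1}$ to the last of them and expect fact (i) to split the radical, nor can you substitute such a polynomial for $a_rb_r$ in $p_1$ without redoing the delicate identity $x_1^2x_2-a_rx_4^2x_5=-b_rx_1a_r+x_1p_1-x_4p_2+x_2x_3x_4$ on which Lemma \ref{lem_5cycle} relies.

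A second, independent gap: your argument makes no provision for the case in which $e$ is a \emph{terminal} vertex of $\overline{L}_i$, i.e., $ef$ is the whisker at $f$. Then $ef$ is a terminal edge of $\overline{L}_i$ and fact (ii) cannot be applied to it either. The paper treats this case separately: it switches to a maximum minimal vertex cover of the already-built graph that avoids $x_1$ (replacing $\{x_1,x_3,b_1,\dots,b_r,d_1,\dots,d_s\}$ by $\{x_2,x_5,a_1,\dots,a_r,c_1,\dots,c_s\}$), constructs a maximum minimal vertex cover $D$ of $L_i$ disjoint from it, and uses the forest result to generate $I(L_i)$ up to radical by $\vert D\vert$ elements, so that a plain disjoint union of generating sets gives the right count. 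Without this case distinction your upper bound on $\ara I(G')$ cannot be completed.
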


\begin{proof} Fix an index $i$ and set $L=L_i$. Since $\overline{L}=\overline{L}_i$ is connected acyclic and is not reduced to a single edge, $x_1$ has in $\overline{L}$ only one neighbour $e=e_i$, which has some neighbour $f\ne x_1$ that is not a terminal vertex.  Let $H$ be the path formed by $x_1, e, f$. By Lemma \ref{lem_5cycle}, $I(G\cup H)$ is a set-theoretic complete intersection. We show that this property also holds for $I(G\cup L)$, i.e., it is preserved when the edge $ef$ is completed to $\overline{L}$. As was observed in the proof of Lemma \ref{lem_5cycle}, a maximum minimal vertex cover of $G\cup H$ is $C=\{x_1, x_3, x_4, b_1, \dots, b_r, d_1, \dots, d_s, f\}$, which induces on $H$ the cover $\{x_1, f\}$.    On the other hand, a maximum minimal vertex cover of $\overline{L}$ is the set $\overline{D}$ of all its non-terminal vertices (among which is $f$). Then $C\cup\overline{D}$ is a minimal vertex cover of $G\cup L$.  Since $C\cap \overline{D}=\{f\}$, $\vert C\cup \overline{D}\vert=\vert C\vert+\vert\overline{D}\vert-1$. We know that $I(\overline{L})$ is a set-theoretic complete intersection. First assume that $e$ is not a terminal vertex of $\overline{L}$. Then, in view of (ii), there exist $\ell_1, \dots, \ell_{\vert\overline{D}\vert-1}\in R$ such that $I(\overline{L})=\sqrt{(ef, \ell_1, \dots, \ell_{\vert\overline{D}\vert-1})}.$ On the other hand there are $g_1, \dots, g_{\vert C\vert}\in R$ such that $I(G\cup H)=\sqrt{(g_1, \dots, g_{\vert C\vert})}$. Since $ef\in I(G\cup H)$, it follows that
$$I(G\cup L)=I(G\cup H\cup\overline{L})=\sqrt{(g_1, \dots, g_{\vert C\vert}, \ell_1, \dots, \ell_{\vert\overline{D}\vert-1})},$$
which proves the claim in this case.
Repeat the above construction for all graphs $L=L_i$ such that $e_i$ is not a terminal vertex of $\overline{L}$ (say, for all $i=1,\dots, u$) and for all graphs $M_i$ in which the only neighbour $g_i$ of $x_3$ is not a terminal vertex of $\overline{M}_i$ (say, for all $i=1,\dots, v$). This will produce a graph $G_1$ with a maximum minimal vertex cover containing
$$\{x_1, x_3, x_4, b_1, \dots, b_r, d_1, \dots, d_s, e_1, \dots, e_u, g_1, \dots, g_v\}.$$
\noindent
We obtain a new maximum minimal vertex cover $C_1$ if we replace
$$\{x_1, x_3, b_1, \dots, b_r, d_1, \dots, d_s\}\ \ \ \mbox{  by }\ \ \  \{x_2, x_5, a_1, \dots, a_r, c_1, \dots, c_s\}.$$
\noindent
Now suppose that $e$ is a terminal vertex of $\overline{L}$ (i.e., $ef$ is the whisker attached to $f$). In $\overline{D}$ replace each non-terminal neighbour of $f$ (i.e., each neighbour of $f$ other than $e$) with the endpoint of the whisker attached to it. This yields a new maximum minimal vertex cover of $\overline{L}$: note that, since $\overline{L}$ is acyclic, no two neighbours of $f$ are adjacent, so that minimality is preserved. Then add the vertex $e$: in this way we obtain a minimal vertex cover $D$ of $L$, which is maximum by Lemma 3.2 in \cite{BM2}.   Then the disjoint union $C_1\cup D$ is a minimal vertex cover of $G_1\cup L$. Now, since $L$ is a tree,  $I(L)$ is generated up to radical by $\vert D\vert$ elements. Thus $I(G\cup L)$ is generated up to radical by $\vert C_1\vert +\vert D\vert$ elements, which completes the proof of the set-theoretic complete intersection property for $I(G_1\cup L)$.\\
\noindent An iteration of the previous construction yields the claim for $G'$.
\end{proof}

Let $\Gamma_4$ be the cycle on the consecutive vertices $x_1, x_2, x_3, x_4$. The graph $G$ in the following lemma is the one described in case 5) of Theorem \ref{thm_CM}.

\begin{lem}\label{lem_4cycle}  For $i=1,2$ let $H_i$ be a non-empty tree  attached to the vertex $x_i$ of $\Gamma_4$ and suppose that $H_1\cup\{x_1x_2\}\cup H_2$ is a whisker tree. Let $G=\Gamma_4\cup H_1\cup H_2$. Then $I(G)$ is a set-theoretic complete intersection.
\end{lem}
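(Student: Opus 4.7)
The strategy mirrors that of Lemmas \ref{lem_5cycle}--\ref{lem_5cycleextended}: combine the Schmitt--Vogel technique behind (i) with the set-theoretic complete intersection property of whisker trees furnished by (ii). First, since $G$ is Cohen--Macaulay by Theorem~\ref{thm_CM}, its edge ideal is unmixed, so $\hgt I(G) = \bight I(G) = n+1$, where $n$ denotes the number of non-terminal vertices of $T$ (a maximum minimal vertex cover of $G$ is obtained by adjoining either $x_3$ or $x_4$ to the set of non-terminal vertices of $T$). Thus the task reduces to exhibiting $n+1$ polynomials whose radical ideal is $I(G)$.

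The plan is to invoke (ii) to obtain a Schmitt--Vogel partition $E(T) = P_0^T \sqcup \cdots \sqcup P_{n-1}^T$ with $P_0^T = \{x_1 x_2\}$, and then extend it to a partition of $E(G)$ with $n+1$ blocks. The key observation is that the Schmitt--Vogel divisibility constraint at level one forces every pairwise product in $P_1^T$ to be divisible by $x_1 x_2$; since no edge of $T$ other than $x_1 x_2$ itself contains both $x_1$ and $x_2$, this forces $P_1^T = \{e_1, e_2\}$ with $e_1$ incident to $x_1$ and $e_2$ incident to $x_2$. I would then define
\[
P_0 = \{x_1 x_2\}, \quad P_1 = \{x_2 x_3,\, x_4 x_1\}, \quad P_2 = P_1^T \cup \{x_3 x_4\}, \quad P_l = P_{l-1}^T \text{ for } 3 \leq l \leq n.
\]
Verifying the Schmitt--Vogel condition for this partition reduces to routine checks: $(x_2 x_3)(x_4 x_1) = x_1 x_2 x_3 x_4$ is divisible by $x_1 x_2 \in P_0$; inside $P_2$, the new pairs $(e_i, x_3 x_4)$ are divisible by $x_4 x_1 \in P_1$ or $x_2 x_3 \in P_1$ (thanks to the incidence of $e_1, e_2$ with $x_1, x_2$), while the old pair $(e_1, e_2)$ inherits its divisor $x_1 x_2 \in P_0$ from the $T$-partition; and for $l \geq 3$, the Schmitt--Vogel divisors from the $T$-partition persist under the index shift, lying in $P_0 \cup P_2 \cup \cdots \cup P_{l-1}$.

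Setting $q_l = \sum_{m \in P_l} m$, the multi-element Schmitt--Vogel theorem (the full form of the lemma in \cite{SV79}) then gives $\sqrt{(q_0, q_1, \ldots, q_n)} = I(G)$, so $\ara I(G) \leq n+1 = \hgt I(G)$, establishing the set-theoretic complete intersection property.

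The main obstacle I anticipate is pinning down the structural claim $|P_1^T| = 2$. The upper bound $|P_1^T| \leq 2$ is the immediate divisibility argument above, while the existence of a fact-(ii) partition realising $|P_1^T| = 2$ follows from the explicit construction in \cite{K09, M13b}, where each of the $n-1$ parts after $P_0^T$ contains exactly two edges; alternatively, one can build the desired Schmitt--Vogel partition of $E(T)$ by hand via a breadth-first traversal of the underlying tree $T_0$ rooted at the edge $x_1 x_2$, pairing edges at each level from the $x_1$-side of $T_0$ with edges from the $x_2$-side.
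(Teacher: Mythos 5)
Your partition $P_0=\{x_1x_2\}$, $P_1=\{x_2x_3,x_4x_1\}$, $P_2=P_1^T\cup\{x_3x_4\}$, $P_l=P_{l-1}^T$ is a nice idea and the divisibility checks you list are all correct, but the argument rests on an input that fact (ii) does not provide, and this is a genuine gap rather than a routine verification. Fact (ii) only asserts the \emph{existence} of $n$ polynomials containing the chosen non-terminal edge and generating $I(T)$ up to radical; it says nothing about those polynomials being the sums $q_l=\sum_{m\in P_l}m$ of a Schmitt--Vogel partition of $E(T)$, let alone one with $P_0^T=\{x_1x_2\}$ and with every edge of $P_1^T$ incident to $x_1$ or $x_2$. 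Your structural argument only shows that \emph{if} such a partition exists and $\vert P_1^T\vert\geq 2$, then its two edges must touch $x_1$ and $x_2$ respectively; it does not rule out $\vert P_1^T\vert=1$ with an edge touching neither (which would break the pair $(e,x_3x_4)$ in $P_2$), and above all it does not establish existence of an $n$-part partition of this shape. Your two proposed remedies are both left unexecuted, and the second one (breadth-first levels, pairing the $x_1$-side with the $x_2$-side) fails as described when the tree is lopsided: all level-one edges at $x_2$ other than one would have pairwise products not divisible by $x_1x_2$, so each part after $P_0^T$ can absorb at most one new edge from each side and the levels must be threaded into a chain, which requires its own inductive construction. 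So what you have is a correct reduction of the lemma to an unproved strengthening of (ii).

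The paper sidesteps this entirely by never applying (ii) to the whole whisker tree $T=H_1\cup\{x_1x_2\}\cup H_2$. It first writes down the explicit three-element presentation $S_0=\{x_1x_2,\ x_1x_4+x_2x_3,\ x_3x_4+x_1y_1+x_2y_2\}$ of $I(\Gamma_4\cup\{x_1y_1,x_2y_2\})$ up to radical, where $y_i$ is a suitably chosen non-terminal neighbour of $x_i$ in $H_i$; then it applies (ii) to each $H_i$ \emph{separately}, completing the edge $x_iy_i$ (already in $\sqrt{(S_0)}$) to $\vert C_i\vert$ generators of $I(H_i)$ up to radical, and glues with fact (i). This uses (ii) only as the black box it is stated to be, at the cost of the small combinatorial checks that $y_i$ can be chosen non-terminal in $H_i$ and that $C_1\cup C_2\cup\{x_3\}$ is a minimal vertex cover of the right size. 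If you want to salvage your route, you would need to prove the stronger, partition-valued version of (ii) for whisker trees rooted at a prescribed non-terminal edge; that is doable but is a lemma in its own right, not an appeal to \cite{K09,M13b}.
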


\begin{proof}
First of all note that, in view of (i), the set $S_0=\{x_1x_2, x_1x_4+x_2x_3,x_3x_4+x_1y_1+x_2y_2\}$ generates  the edge ideal of $\Gamma_4\cup\{x_1y_1, x_2y_2\}$ up to radical. Set $H=H_1\cup\{x_1x_2\}\cup H_2$.

Fix an index $i$. Then $H_i$ is either a single edge $x_iy_i$ or a whisker tree. In the latter case $x_i$ has in $H_i$ some non-terminal neighbour $y_i$ (since in a whisker graph no vertex is adjacent to more than one terminal vertex). If $H_i$ is a single edge, then $C_i=\{x_i\}$ is a minimal vertex cover of $H_i$. Otherwise a (maximum) minimal vertex cover $C_i$ of $H_i$ consists of all non-terminal vertices of $H_i$.  Note that in this case $x_i$ is not a terminal vertex in $H_i$: otherwise it would be the whisker at $y_i$ in $H_i$, but then $y_i$ would not have any whisker in $H$, where $x_i$ is not a terminal vertex. Thus $x_iy_i$ is not a terminal vertex of $H_i$.

In any case, the set $D=C_1 \cup C_2 \cup \{x_3\}$ is a minimal vertex cover of $G$.   Now, in view of (ii), there is a set $S_i$ of $\vert C_i\vert-1$ polynomials such that $I(H_i)=\sqrt{(x_iy_i, S_i)}$ (this is trivially true if $H_i$ is a single edge, with $S_i=\emptyset$).  In view of (i), we thus have that $I(G)=\sqrt{(S_0 \cup S_1 \cup S_2)}$, where $\vert S_0\vert+\vert S_1\vert+\vert S_2\vert= \vert C_1\vert+\vert C_2\vert+1$, which proves the claim.
\end{proof}

We have thus completed the proof of the announced result.

\begin{thm}\label{thm_1cycle}
Let $G$ be a Cohen-Macaulay graph with exactly one cycle. Then $I(G)$ is a set-theoretic complete intersection.
\end{thm}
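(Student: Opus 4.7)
The plan is to apply the structural classification of Cohen-Macaulay unicyclic graphs given in Theorem \ref{thm_CM}. Since the set-theoretic complete intersection property of an edge ideal is additive over connected components (the variable sets of distinct components being disjoint, so one can take the union of generating sets up to radical), and Cohen-Macaulayness also passes to connected components, I would first reduce to the case that $G$ is connected. The presence of exactly one cycle then forces $G$ to fall into one of the five classes 1)--5) listed in Theorem \ref{thm_CM}, and I would verify the conclusion separately for each class.

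For the first three classes the argument is immediate from results already on the table. Case 1), where $G$ is a $3$- or $5$-cycle, is Corollary 1 of \cite{BKMY12}, which exhibits explicit generators up to radical whose number matches the height. Case 2), where $G$ is a whisker graph, is Proposition 3 together with Remark 3 of \cite{M13b}. For case 3), where $\Gamma$ is a triangle with some vertex of degree $2$ and the induced subgraph on $V(G)\setminus V(\Gamma)$ is a disjoint union of whisker trees and single edges, I would apply Corollary \ref{cor_3} with $n=k=1$: this yields $\ara I(G)\leq \bight I(G)$, and the Cohen-Macaulay hypothesis forces equality throughout (\ref{0}), so $I(G)$ is a set-theoretic complete intersection.

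For the remaining two classes I would invoke the technical lemmas established immediately above. The graphs in case 4) are exactly the graphs $G'$ treated by Lemma \ref{lem_5cycleextended} (with the auxiliary Lemma \ref{lem_5cycle} disposing of the clean subcase from which the general one is bootstrapped), and the graphs in case 5) are exactly those treated by Lemma \ref{lem_4cycle}. Assembling these five cases gives the theorem. The real difficulty is not in this final assembly but upstream, inside Lemmas \ref{lem_5cycle}--\ref{lem_4cycle}: there one has to combine the Schmitt--Vogel trick (fact (i)) with the rigid structure of the attached trees in order to produce \emph{exactly} $\hgt I(G)$ polynomials whose radical is $I(G)$, and no more. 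Once those explicit constructions are granted, Theorem \ref{thm_1cycle} reduces to a case-by-case invocation of the classification and the appropriate lemma.
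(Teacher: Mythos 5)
Your proposal is correct and follows essentially the same route as the paper: reduce to the connected case, invoke the classification of Theorem \ref{thm_CM}, and dispose of cases 1)--3) via \cite{BKMY12}, \cite{M13b} and Corollary \ref{cor_3} respectively, and of cases 4) and 5) via Lemmas \ref{lem_5cycle}--\ref{lem_4cycle}. Your explicit remarks on additivity over connected components and on how Corollary \ref{cor_3} combined with the chain of inequalities (\ref{0}) forces equality in case 3) are correct and merely make explicit what the paper leaves implicit.
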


\section{Final Remarks}
The assumptions of Corollary \ref{cor_chordal} could suggest that the presence of minimal cycles of length 4 or 5 may in general provide an obstruction to the set-theoretic complete intersection property in the Cohen-Macaulay case, in the same way in which it sometimes prevents to improve the bound given in Theorem \ref{thm_main} (which, in fact, is what occurs in the example presented at the end of \cite{BM2}). Also the next result, while strongly enforcing the idea that Cohen-Macaulayness is ``close'' to set-theoretic complete intersection, clearly hints at this kind of obstruction.

\begin{cor}
Let $G$ be a connected graph, which is not a single edge nor a cycle of length 7, and has no minimal cycles of length less than 6. Then the following conditions are equivalent:
\begin{itemize}
  \item[(a)] $I(G)$ is pure,
  \item[(b)] $G$ is a whisker graph,
  \item[(c)] $G$ is Cohen-Macaulay,
  \item[(d)] $I(G)$ is a set-theoretic complete intersection.
\end{itemize}
\end{cor}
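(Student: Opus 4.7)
The plan is to deduce the corollary from Corollary~\ref{cor_chordal}, using the triangle-free hypothesis to identify simplices with whiskers. The implications (d)~$\Rightarrow$~(c)~$\Rightarrow$~(a) are classical (as noted right after~(\ref{0})): a squarefree monomial set-theoretic complete intersection is Cohen-Macaulay, and a Cohen-Macaulay edge ideal is unmixed. For (b)~$\Rightarrow$~(d) I invoke Proposition~2 and Remark~3 of \cite{M13b}, which give $\ara I(G)=\hgt I(G)$ whenever $G$ is a whisker graph.

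The substantive direction is (a)~$\Rightarrow$~(b). Since $G$ has no induced cycle of length less than $6$, it is triangle-free and has no induced $4$- or $5$-cycle; I first upgrade this to the stronger fact that $G$ has no $4$- or $5$-cycle as a subgraph at all. A $4$-cycle subgraph is either induced (contradiction) or carries a chord producing a triangle (impossible); a $5$-cycle subgraph with a chord yields a triangle or a $4$-cycle, both excluded, so every $5$-cycle subgraph would be induced, contradiction. Hence Corollary~\ref{cor_chordal} applies and turns (a) into the statement that every vertex of $G$ lies in exactly one simplex.

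To translate this into the whisker-graph structure I use that in a triangle-free graph any two distinct neighbours of a simplicial vertex are non-adjacent, so a simplicial vertex has at most one neighbour; since $G$ is connected with more than one vertex, every simplicial vertex is in fact terminal, and therefore every simplex of $G$ is an edge containing a terminal vertex, i.e.\ a whisker. The condition ``every vertex lies in exactly one simplex'' then reads: every non-terminal vertex $x$ has a unique terminal neighbour $y_x$, and every terminal vertex is the $y_x$ of a unique non-terminal vertex. Writing $N$ for the non-terminal vertices and $G_0=G[N]$, the decomposition $G=G_0\cup\{xy_x : x\in N\}$ exhibits $G$ as the whisker graph on $G_0$, which is (b).

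The subtle point, and the only place where the exclusion of $G=C_7$ is actually used, is the preceding application of Corollary~\ref{cor_chordal}: $I(C_7)$ is pure (its minimal vertex covers all have four elements) yet $C_7$ has no simplicial vertex, so the conclusion ``every vertex lies in a simplex'' fails there outright. The exclusion of the single edge is only to rule out a degenerate boundary case. The main obstacle is precisely the triangle-free translation from ``simplex'' to ``whisker''; once that is in place, the whisker-graph conclusion matches cleanly with the simplex conclusion coming from Corollary~\ref{cor_chordal}.
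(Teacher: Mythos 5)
Your proof is correct, but it takes a genuinely different route from the paper's for the substantive part. The paper obtains the equivalence (a) $\Leftrightarrow$ (b) in one stroke by citing Corollary 5 of \cite{FHN93} (the Finbow--Hartnell--Nowakowski characterization of well-covered graphs of girth at least $6$, which explicitly excepts $K_1$ and $C_7$), and then closes the cycle with (b) $\Rightarrow$ (d) $\Rightarrow$ (c) $\Rightarrow$ (a) just as you do. You instead derive (a) $\Rightarrow$ (b) internally from Corollary \ref{cor_chordal}: your upgrade from ``no induced cycle of length less than $6$'' to ``no $4$- or $5$-cycle subgraph'' is correct, and so is the triangle-free dictionary identifying simplices with terminal edges and producing the bijection between non-terminal vertices and their unique terminal neighbours (note that each non-terminal vertex does retain a non-terminal neighbour, so the base graph of the whisker decomposition has no isolated vertices). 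This buys a proof that reuses the paper's own machinery (ultimately \cite{HMT15}) instead of a further external citation, and your implication cycle (a) $\Rightarrow$ (b) $\Rightarrow$ (d) $\Rightarrow$ (c) $\Rightarrow$ (a) does give the full equivalence. One point deserves to be made explicit rather than folded into the phrase ``where the exclusion of $G=C_7$ is used'': $C_7$ satisfies the stated hypotheses of Corollary \ref{cor_chordal} (it has no $4$- or $5$-cycle subgraph) yet $I(C_7)$ is pure while $C_7$ has no simplicial vertex, so the implication (a) $\Rightarrow$ (c) of Corollary \ref{cor_chordal} as literally stated fails for $C_7$. Your appeal is therefore really to the underlying result of \cite{HMT15} with $C_7$ excepted; this is an imprecision you have uncovered in the earlier statement, not a gap in your argument for the graphs covered by the present corollary, but it should be acknowledged as such.
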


\begin{proof}
The equivalence (a) $\Leftrightarrow$ (b) is Corollary 5 in \cite{FHN93}.  Moreover,  (b) $\Rightarrow$ (d) follows from \cite{M13b} Proposition 3. The implications (d) $\Rightarrow$ (c) and (c) $\Rightarrow$ (a) are well known.
\end{proof}

\begin{Acknowledgements}
The authors are indebted to Russ Woodroofe for raising the question that inspired this paper.
\end{Acknowledgements}


\begin{thebibliography}{99}
{\small
\bibitem{B} {\sc M.~Barile}, \textit{On the arithmetical rank of the edge ideals of forests}, Comm. Algebra \textbf{36} (2008), 12, 4678-4703.
\bibitem{BKMY12} {\sc M.~Barile, D.~Kiani, F.~Mohammadi and S.~Yassemi}, \textit{Arithmetical rank of the cyclic and bicyclic graphs}. J. Algebra Appl. \textbf{11} (2012), 2.
\bibitem{BM} {\sc M.~Barile and A.~Macchia}, \textit{A note on Cohen-Macaulay graphs}. Comm. Algebra. \textbf{44} (2016), 1473-1477.
\bibitem{BM2} {\sc M.~Barile and A.~Macchia}, \textit{The arithmetical rank of the edge ideals of graphs with pairwise disjoint cycles}, J. Algebra Appl. \textbf{15} (2016), 7, 22 pages.
%\bibitem{CCR15} {\sc I.~D.~Castrill\'on, R.~Cruz and E.~Reyes}, \textit{On well-covered, vertex decomposable and Cohen-Macaulay graphs}, preprint (2015), \href{http://arxiv.org/abs/1505.00060}{arXiv:1505.00060}.
\bibitem{EOT10} {\sc V.~Ene, O.~Olteanu and N.~Terai}, \textit{Arithmetical rank of lexsegment edge ideals}, Bull. Mat. Soc. Sci. Mat. Roumanie (N.S.) \textbf{53} (2010), 101, 315-327.
\bibitem{EV97} {\sc M.~Estrada and R.~H.~Villarreal}, \textit{Cohen-Macaulay bipartite graphs}, Arch. Math. \textbf{68} (1997), 124-128.
\bibitem{Ha} {\sc F.~Harary}. \textit{Graph Theory}, Addison-Wesley, 1969.
\bibitem{FHN93} {\sc A.~Finbow, B.~Hartnell and R.~J.~Nowakowski}, \textit{A characterization of well covered graphs of girth $5$ or greater}, J. Comb. Theory B \textbf{57} (1993), 44--68.
\bibitem{HH05} {\sc J.~Herzog and T.~Hibi}, \textit{Distributive lattices, bipartite graphs and Alexander Duality}, J. Alg. Comb. \textbf{22} (2005), 289-302.
\bibitem{HHZ05} {\sc J.~Herzog, T.~Hibi and X.~Zheng}, \textit{Cohen-Macaulay chordal graphs}, J. Comb. Theory A \textbf{113} (2006), 911--916.
\bibitem{HMT15} {\sc D.~T.~Hoang, N.~C.~Minh and T.~N.~Trung}, \textit{Cohen-Macaulay graphs with large girth}, J. Algebra Appl. \textbf{14} (2015), 16 pages.
\bibitem{KM12} {\sc D.~Kiani and F.~Mohammadi}, \textit{On the arithmetical rank of the edge ideals of some graphs}, Alg. Coll. \textbf{19} (2012), 1, 797-806.
\bibitem{KMY10} {\sc D.~Kiani, F.~Mohammadi and S.~Yassemi}, \textit{Shellable cactus graphs}, Math. Scand. \textbf{106} (2010), 161-167.
\bibitem{K09} {\sc K.~Kimura}, \textit{Lyubeznik resolutions and the arithmetical rank of monomial ideals}, Proc. Amer. Math. Soc. \textbf{137} (2009), 1, 3627-3635.
\bibitem{K} {\sc K.~Kimura}, \textit{Arithmetical rank of Cohen-Macaulay squarefree monomial ideals of height two}, J. Commutative Algebra \textbf{3} (2011), 1, 31-46.
\bibitem{KRT} {\sc K.~Kimura, G.~Rinaldo and N.~Terai}, \textit{Arithmetical rank of squarefree monomial ideals generated by five elements or with arithmetic degree four}, Comm. Algebra \textbf{40} (2012), 11, 4147-4170.
\bibitem{KT13} {\sc K.~Kimura and N.~Terai}, \textit{Binomial arithmetical rank of edge ideals of forests}, Proc. Amer. Math. Soc. \textbf{141} (2013), 1925-1932.
\bibitem{Ku09} {\sc M.~Kummini}, \textit{Regularity, depth and arithmetic rank of bipartite edge ideals}, J. Algebraic Combin. \textbf{30} (2009), 4, 429-445.
\bibitem{M13a} {\sc A.~Macchia}, \textit{On the set-theoretic complete intersection property for the edge ideals of whisker graphs}, Serdica Math. J. \textbf{40} (2014), 1, 41-54.
\bibitem{M13b} {\sc A.~Macchia}, \textit{The arithmetical rank of the edge ideals of graphs with whiskers}, Beitr. Algebra Geom. \textbf{56} (2015), 1, 147-158.
\bibitem{M13c} {\sc A.~Macchia}, \textit{The arithmetical rank of edge ideals}. Ph.D. Thesis, University of Bari, Italy (2013).
%\bibitem{M14} {\sc F.~Mohammadi}, \textit{Powers of the vertex cover ideals}, Collect. Math. \textbf{65} (2014), 2, 169--181.
\bibitem{MV11} {\sc S.~Morey and R.~H.~Villarreal}, \textit{Edge ideals: algebraic and combinatorial properties}. In: Progress in Commutative Algebra, Combinatorics and Homology, Vol. 1 (C. Francisco, L. C. Klingler, S. Sather-Wagstaff and J. C. Vassilev, Eds.), De Gruyter,  2012, 85-126.
\bibitem{SV79} {\sc T.~Schmitt and W.~Vogel}, \textit{Note on set-theoretic intersections of subvarieties of projective space}, Math. Ann. \textbf{245} (1979), 3, 247--253.
\bibitem{SVV} {\sc A.~Simis, W.~Vasconcelos and R.~H.~Villarreal}, \textit{On the ideal theory of graphs}, J. Algebra \textbf{167} (1994), 2, 389-416.
\bibitem{TV90} {\sc J.~Topp, L.~Volkmann}, \textit{Well covered and well dominated block graphs and unicyclic graphs}, Mathematica Pannonica \textbf{1} (1990), 2, 55-66.
\bibitem{V90} {\sc R.~H.~Villarreal}, \textit{Cohen-Macaulay graphs}, Manuscripta Math. \textbf{66} (1990), 3, 277-293.
}
\end{thebibliography}
\end{document}